
\documentclass{amsart}
\usepackage[english]{babel}
\usepackage{amsmath,amscd,amssymb,amsthm,amsxtra}
\usepackage{microtype}
\usepackage{mathrsfs}
\usepackage{color,graphicx}
\usepackage{latexsym}
\usepackage{epsfig,epstopdf}
\usepackage[margin=1.5in]{geometry}


\theoremstyle{plain}
\newtheorem{thm}{Theorem}[section]

\newtheorem{lem}[thm]{Lemma}

\theoremstyle{definition}

\newtheorem{rmk}[thm]{Remark}
\numberwithin{thm}{section}
\numberwithin{equation}{section}
\numberwithin{figure}{section}


\def\1{{\bf 1}}
\def\l{\lambda}
\def\L{\Lambda}

\DeclareMathOperator{\Id}{Id}

\DeclareMathOperator*{\osc}{osc}
\DeclareMathOperator{\trace}{tr}


\begin{document}

\title[Nonlinear Dependence]{Nonlinear Bounds in H\"older Spaces for the Monge-Amp\`{e}re Equation}

\author[A.\ Figalli]{Alessio Figalli}
\address{The University of Texas at Austin, 
Mathematics Dept.,
Austin, TX 78712, USA}
\email{figalli@math.utexas.edu}

\author[Y.\ Jhaveri]{Yash Jhaveri}
\address{The University of Texas at Austin, 
Mathematics Dept.,
Austin, TX 78712, USA}
\email{yjhaveri@math.utexas.edu}

\author[C.\ Mooney]{Connor Mooney}
\address{The University of Texas at Austin, 
Mathematics Dept.,
Austin, TX 78712, USA}
\email{cmooney@math.utexas.edu}


\begin{abstract}
We demonstrate that $C^{2,\alpha}$ estimates for the Monge-Amp\`{e}re equation depend in a highly nonlinear way both on the $C^{\alpha}$ norm of the right-hand side and $1/\alpha$.
First, we show that if a solution is strictly convex, then the $C^{2,\alpha}$ norm of the solution depends polynomially on the $C^{\alpha}$ norm of the right-hand side. 
Second, we show that the $C^{2,\alpha}$ norm of the solution is controlled by $\exp((C/\alpha)\log(1/\alpha))$ as $\alpha \to 0$.
Finally, we construct a family of solutions in two dimensions to show the sharpness of our results.

\vspace{3mm}

\noindent {\bf Keywords:} Monge-Amp\`{e}re; Schauder estimates
\end{abstract}
\maketitle


\section{Introduction}
\label{sec:Intro}
The model case in the study of linear elliptic equations is the Poisson equation $\Delta v=g.$ 
It is well known that for such an equation, the solution $v$ gains two derivatives with respect to $g$ in H\"older spaces. 
More precisely, if
$$
\Delta v=g\qquad \text{in }B_1\subset \mathbb R^n,
$$
then there exists a constant $C > 0$ depending only on $n$ and $\alpha$ such that
\[ \|v\|_{C^{2,\alpha}(B_{1/2})} \leq C\big(\|v\|_{L^{\infty}(B_1)}  + \|g\|_{C^{\alpha}(B_1)}\big) \qquad \forall \alpha \in (0,1). \]
In addition, it is known that $C \sim 1/\alpha$ as $\alpha \to 0$.
This estimate extends easily to linear uniformly elliptic equations,
and even to fully nonlinear uniformly elliptic equations (see the Appendix).
It is therefore natural to ask whether or not one may extend such a well-quantified estimate to other more general elliptic equations.

In this note, we address this issue for the Monge-Amp\`{e}re equation investigating the following general question:\\ \\
Let $u: \mathbb{R}^n\to \mathbb{R}$ be a strictly convex solution to the Monge-Amp\`ere equation
\begin{equation}
\label{eqn:MA}
\det D^2u = f \qquad \text{in } \mathbb{R}^n,
\end{equation}
where $0 < \lambda \leq f \leq \Lambda < \infty$.
Is it possible to find a quantitative dependence for $\|u\|_{C^{2,\alpha}(B_{1/2})}$ in terms of $\|f\|_{C^{\alpha}(B_1)}$ and $\alpha$?
More specifically, among all solutions which have a fixed modulus of strict convexity, we look for a function $\omega:\mathbb{R}^+\times \mathbb{R}^+\to \mathbb{R}^+$, depending only on the dimension $n$, such that
the estimate
\begin{equation}
\label{eqn:omega}
\|u\|_{C^{2,\alpha}(B_{1/2})} \leq \omega\big(\alpha, \|f\|_{C^{\alpha}(B_1)}\big)
\end{equation}
holds. \\

Notice that by Caffarelli's $C^{2,\alpha}$ interior estimates for Monge-Amp\`ere (see \cite{C3}) and a compactness argument, one can show that such a function $\omega$ exists.
However, as we have already said, it would be interesting to understand 
how $\omega$ depends on its arguments.
Here we shall see that $\omega$ grows polynomially in $\|f\|_{C^{\alpha}(B_1)}$ and at least exponentially in $1/\alpha$ as $\alpha\to 0$,
which is in strong contrast with linear uniformly elliptic equations where $\omega$ depends linearly both on $\|f\|_{C^{\alpha}(B_1)}$ and $1/\alpha$ as $\alpha \to 0$.
In particular, we first prove the following two results:

\begin{thm}
\label{thm:positive1}
Let $u$ be a strictly convex solution to
\[ \det D^2u = f \qquad \text{in } B_1 \subset \mathbb{R}^n \]
with $0 < \lambda \leq f \leq \Lambda < \infty$ and $f \in C^{\alpha}(B_1)$. 
Then, there exist two constants $C>0$ and $\rho>1$ such that
\begin{equation*}
\label{eqn:poly}
\|D^2u\|_{C^{\alpha}(B_{1/2})} \leq C\|f\|_{C^{\alpha}(B_{1})}^\rho.
\end{equation*}
Here $C$ depends only on $\lambda,\Lambda,n,\alpha$, and the modulus of strict convexity of $u$, while $\rho $ depends only on $\lambda,\Lambda,n$, and $\alpha$.
\end{thm}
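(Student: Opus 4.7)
My plan is to combine Caffarelli's $C^{2,\alpha}$ interior estimate \cite{C3} with an affine-normalization argument on sections of $u$, structured so that the polynomial exponent $\rho$ comes out universal while the modulus of strict convexity only affects the prefactor $C$. Let $M := \|f\|_{C^\alpha(B_1)}$ and let $\delta_0 = \delta_0(\lambda,\Lambda,n,\alpha)$ denote the smallness threshold in Caffarelli's perturbative $C^{2,\alpha}$ estimate (the one that says: if $\det D^2 v = g$ in a normalized convex set and $\osc g \le \delta_0$, then $D^2 v$ is $C^\alpha$ at the center with bounds controlled by $\|g\|_{C^\alpha}$).

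First, for each $x_0 \in B_{1/2}$, I use the modulus of strict convexity to fix an initial section $S_0(x_0)$ of definite (modulus-controlled) size. Normalizing it via John's lemma together with the Monge--Amp\`ere area bound $|S_h|\sim h^{n/2}$ gives an affine map $T_0=T_0(x_0)$ and a rescaled solution $u_0$ in a domain equivalent to $B_1$ with $\det D^2 u_0 = f_0 \in [c\lambda, C\Lambda]$, a \emph{universal} modulus of strict convexity at the center, and $\|f_0\|_{C^\alpha}\le C_0 M$. This is the only step whose constants depend on the initial modulus of strict convexity; every constant appearing afterwards will be universal.

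Next, since $\osc_{B_r} f_0 \le C_0 M(2r)^\alpha$, I zoom into a ball of radius $r_1 \sim (\delta_0/(C_0 M))^{1/\alpha}$ around the center and, using the universal modulus of strict convexity of $u_0$, select a section $S_h\subset B_{r_1}$ with $|S_h|\sim h^{n/2}$ and $h$ a universal function of $r_1$. John's lemma produces an affine $T_1$ normalizing $S_h$, and setting
\[
\tilde u(y) := h^{-1}\bigl(u_0(T_1^{-1}y) - \ell(y)\bigr)
\]
for an appropriate linear $\ell$ yields a normalized strictly convex solution of $\det D^2 \tilde u = \tilde f$ with $\osc \tilde f \lesssim \delta_0$ and $[\tilde f]_{C^\alpha}$ universally bounded. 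Caffarelli's perturbative estimate then delivers $\|D^2\tilde u\|_{C^\alpha} \le C(\lambda,\Lambda,n,\alpha)$.

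Unwinding via the relation $D^2 u_0 = h\,T_1^\top D^2\tilde u\,T_1$ gives $[D^2 u_0]_{C^\alpha} \lesssim h\|T_1\|^{2+\alpha}$; because the universal modulus of strict convexity of $u_0$ forces $\|T_1\| \lesssim r_1^{-1} \lesssim M^{1/\alpha}$ with universal constants, this produces a pointwise $C^{2,\alpha}$ bound at the center that is polynomial in $M$ with \emph{universal} exponent $\rho = \rho(\lambda,\Lambda,n,\alpha)$. Composing back through $T_0$ (which absorbs all remaining modulus dependence into $C$) and covering $B_{1/2}$ gives the theorem. The main obstacle will be isolating the modulus-dependent normalization as a single fixed step so that the subsequent $M$-dependent rescaling takes place entirely in the Caffarelli-universal regime, and carefully tracking how the $C^\alpha$ norm of the right-hand side transforms under each affine rescaling so that it indeed lands in that regime; without this separation, one risks letting the modulus of strict convexity leak into the exponent~$\rho$.
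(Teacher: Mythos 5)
Your strategy is the one the paper uses: zoom to a section on which the right-hand side has small oscillation (size roughly $M^{-c/\alpha}$), normalize it affinely via John's lemma, apply Caffarelli's interior $C^{2,\alpha}$ estimate to the normalized problem, scale back, and cover $B_{1/2}$. The two-step normalization (first $T_0$ to a universal modulus of strict convexity, then $T_1$ to the small section) is a cosmetic reorganization of the paper's single map $L_{\bar h}$; in both versions the modulus-dependent constants land in the prefactor $C$ because the exponent $\sigma$ in the polynomial modulus of strict convexity is universal (depends only on $\lambda,\Lambda,n$).

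There is, however, a quantitative slip at the critical point. You assert that the universal modulus of strict convexity forces $\|T_1\|\lesssim r_1^{-1}$. But the strict convexity estimate \eqref{eqn:StrictConvexity} controls only the \emph{outer} radius of sections, $Z_h\subset c_0^{-1}h^{1/\sigma}B_1$, while $\|T_1\|=\|L_h^{-1}\|$ is governed by the \emph{inner} radius of $Z_h$, which by convexity and the Monge--Amp\`ere volume bound is $\gtrsim h$ (see \eqref{eqn:Zh}); there is no reason for these to be comparable. Since you need $h\sim r_1^{\sigma}$ for the section to fit inside $B_{r_1}$, the correct bound is $\|T_1\|\lesssim h^{-1}\sim r_1^{-\sigma}\sim M^{\sigma/\alpha}$, not $M^{1/\alpha}$. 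The eccentricity of sections (the gap between $h$ and $h^{1/\sigma}$) is exactly the degeneracy of the Monge--Amp\`ere equation, and it is the mechanism behind the nonlinear exponent; writing $\|T_1\|\lesssim r_1^{-1}$ implicitly assumes sections are round, which is what one would have in the uniformly elliptic setting of the Appendix. Your final conclusion (polynomial with a universal exponent $\rho$) survives because $\sigma$ is universal, but the exponent produced by the argument as written is wrong. One further minor point: ``covering $B_{1/2}$'' needs a short chaining argument along segments (as in the paper's last step) rather than a plain covering, since the local estimate only holds on sections of $M$-dependent size, and this chaining contributes an additional power of $M$ to $\rho$.
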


\begin{thm}
\label{thm:positive2}
Let $u$ be a strictly convex solution to
\[ \det D^2u = f \qquad \text{in } B_1 \subset \mathbb{R}^n \]
with $0 < \lambda \leq f \leq \Lambda < \infty$ and $f \in C^{\alpha}(B_1)$. 
Also, assume that $\|f\|_{C^{\alpha}(B_1)} = 1$. Then, there exists a positive constant $C$,
depending on 
$\lambda,\Lambda,n$, and the modulus of strict convexity of $u$, such that
\[ \|D^2u\|_{C^{\alpha}(B_{1/2})} \leq e^{\frac{C}{\alpha}\log\frac{1}{\alpha}}. \]
\end{thm}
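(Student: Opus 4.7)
The plan is to revisit the iterative scheme behind the proof of Theorem \ref{thm:positive1}, carefully tracking the dependence of every constant on $\alpha$ as $\alpha \to 0$. Since $\|f\|_{C^\alpha(B_1)} = 1$, the polynomial factor $\|f\|_{C^\alpha}^{\rho}$ in Theorem \ref{thm:positive1} equals $1$, so the entire content of the estimate sits in the multiplicative constant; the task reduces to showing this constant is bounded by $e^{(C/\alpha)\log(1/\alpha)}$.

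First I would normalize via Caffarelli's strict convexity and affine invariance, absorbing the modulus of strict convexity into $C$, so that $u$ is pinched between two paraboloids on $B_1$ and all sections of $u$ have bounded eccentricity (via John's Lemma together with the Caffarelli--Guti\'errez theory of sections). Then I would run an iteration on dyadic sections $S_{h^k}$ of $u$, for some $h \in (0,1)$ to be chosen as a function of $\alpha$. At each step, after an affine normalization $A_k$ of determinant one, the rescaled function $u_k$ satisfies a Monge-Amp\`ere equation whose right-hand side has $L^\infty$-oscillation $\lesssim h^{k\alpha}$. Comparing $u_k$ with the solution of the corresponding constant-coefficient problem and invoking Caffarelli's interior $C^{2,\alpha_0}$ estimate for the latter (with $\alpha_0$ and its constant $C_0$ independent of $\alpha$), I would extract a quadratic polynomial $Q_{k+1}$ approximating $u_k$ on the next scale with improvement $h^{2+\alpha_0}$, modulo a right-hand side error of order $h^{k\alpha+2}$.

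The crux, and the source of the exponential dependence on $1/\alpha$, is balancing this iteration at the $C^{2,\alpha}$ rate as $\alpha \to 0$. Because the gain $h^{\alpha_0 - \alpha}$ at each step must absorb the constant $C_0$ together with the compounded distortion from the normalizations $A_k$, one is forced to take $h$ very small when $\alpha$ is small—roughly $h\sim \alpha^{O(1)}$—so that each iteration step contributes a multiplicative factor of size $(1/\alpha)^{O(1)}$, while the number of steps before the residual error becomes harmless is $\sim 1/\alpha$. Multiplying these together gives a total constant of order $(1/\alpha)^{C/\alpha} = e^{(C/\alpha)\log(1/\alpha)}$, as desired.

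The main obstacle is the bookkeeping required to make this balance precise: one must verify that at no step of the induction does the compounded affine normalization $A_k \circ A_{k-1} \circ \cdots \circ A_1$ destroy the comparability of sections with balls, and one must control the constants uniformly as $h = h(\alpha) \to 0$. These are handled by the Caffarelli--Guti\'errez theory, which ensures that sections of normalized strictly convex Monge-Amp\`ere solutions retain bounded eccentricity at every scale, and by carrying out the summation of geometric error terms explicitly rather than invoking a compactness argument.
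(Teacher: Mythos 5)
Your overall strategy—rescale using sections, compare with constant-coefficient solutions, and track how every constant depends on $\alpha$—is the right one, and your back-of-envelope accounting does land on the correct order $(1/\alpha)^{C/\alpha}$. However, the mechanism you describe for the exponential is not the one that actually works, and you miss the key technical constraint that produces it.

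First, the iteration step size in the $C^{2,\alpha}$ improvement step need not shrink with $\alpha$. In the paper's Lemma~\ref{lem:basic}, comparison with the solution $w$ of $\det D^2 w = 1$ gives (via Pogorelov, not just $C^{2,\alpha_0}$) a $C^3$ bound on $w - \hat P$, so the approximation error after one step is $C_n\hat\delta(\hat\epsilon + r^3)$; choosing $\hat r$ with $2C_n\hat r^{1-\alpha}\le 1$ works uniformly for $\alpha\le 1/2$ with $\hat r$ a \emph{fixed} dimensional constant. Your claim that the gain $h^{\alpha_0-\alpha}$ must absorb $C_0$ together with ``compounded distortion'' and therefore forces $h\sim\alpha^{O(1)}$ is not correct: $h^{\alpha_0-\alpha}\to h^{\alpha_0}$ as $\alpha\to 0$, so that constraint is satisfied by a fixed $h$, and Caffarelli's section theory ensures normalizations do not compound distortion when the ratio $\Lambda/\lambda$ is fixed. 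Second, and more importantly, you miss where the $1/\alpha$ actually enters: in the iteration (Lemma~\ref{lem:iteration}) the tolerance is $\hat\delta = \bar\delta\alpha$, with the extra factor of $\alpha$ chosen precisely because the Hessians of the approximating quadratics drift by $\hat C\hat\delta\hat r^{(k-1)\alpha}$ at step $k$, and $\sum_k \hat r^{k\alpha}\sim 1/(1-\hat r^{\alpha})\sim 1/(\alpha\log(1/\hat r))$ blows up like $1/\alpha$. To keep $|D^2\hat Q_k - \Id|\le 1/2$ throughout the iteration (needed for the constant-coefficient comparison), the tolerance must be $\alpha$-small. Third, the exponential does not arise from compounding $\sim 1/\alpha$ iterations each worth $(1/\alpha)^{O(1)}$; it comes from a single preliminary affine rescaling. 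Because $\|f\|_{C^\alpha(B_1)} = 1$, one must first map $u$ on a section $Z_h$ to a round domain via $L_h$ (with $\|L_h^{-1}\|\lesssim h^{-1}$), so that $\|f_h - 1\|_{C^\alpha}\lesssim h^{\alpha/\sigma}$; the constraint that this be $\lesssim\alpha$ (to meet the iteration's tolerance) forces $h\sim\alpha^{\sigma/\alpha}$, and scaling the resulting universal estimate back through $L_h^{-1}$ picks up $h\|L_h^{-1}\|^{2+\alpha}\lesssim h^{-(1+\alpha)}\sim(1/\alpha)^{C\sigma/\alpha}$. This is the whole exponential; the iteration itself contributes only universal constants.
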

We then show the sharpness of these estimates by constructing a family of solutions in two dimensions, with a fixed modulus of strict convexity,
for which $\omega$ behaves in a highly nonlinear way in both its arguments. 
We show:
\begin{thm}
\label{thm:negative}
For any $\alpha\in (0,1)$ 
there exist a family $u_r:\mathbb R^2\to \mathbb R$ of $C^{2,\alpha}$ convex functions with a uniform modulus of strict convexity and a family $f_r:\mathbb R^2 \to \mathbb R$
of $C^{\alpha}$ functions such that
$$
\det D^2u_r = f_r\qquad \text{in } \mathbb{R}^2
$$
with
$$
0< \lambda \leq f_r \leq \Lambda<\infty \qquad\text{and}\qquad \lim_{r\to 0} \|f_r\|_{C^{\alpha}(B_{1})}=\infty,
$$
and
$$
\|D^2u_r\|_{C^{\alpha}(B_{1/2})}\geq c\|f_r\|_{C^{\alpha}(B_{1})}^\rho
$$
for some $\rho>1$.

Furthermore, there exist a family $u_\alpha:\mathbb R^2\to \mathbb R$ of $C^{2,\alpha}$ convex functions with a uniform modulus of strict convexity and a family $f_\alpha:\mathbb R^2 \to \mathbb R$ of $C^{\alpha}$ functions, with $\alpha \in (0,1/2]$, such that
$$
\det D^2u_\alpha = f_\alpha\qquad \text{in } \mathbb{R}^2
$$
with
$$
0< \lambda \leq f_\alpha \leq \Lambda<\infty\qquad \text{and}\qquad \|f_\alpha\|_{C^{\alpha}(B_{1})}\leq C,
$$
and
$$
\|D^2u_\alpha\|_{C^{\alpha}(B_{1/2})}\geq e^{c/\alpha}.
$$
\end{thm}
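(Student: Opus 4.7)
The plan is to build both families of counterexamples explicitly in two dimensions using a single base profile combined with anisotropic rescaling. The guiding principle is that $\det D^2u$ is invariant under volume-preserving linear changes of variable, but the Hölder seminorm of $D^2u$ is not: an anisotropic dilation with $\det=1$ leaves $f$ bounded in $L^\infty$ while blowing up one eigenvalue of the Hessian. This mismatch in scaling is what drives both the polynomial gap in $\|f\|_{C^\alpha}$ and the exponential blow-up in $1/\alpha$.

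\textbf{Base profile and polynomial family.} I would look for a strictly convex $u_*:\mathbb R^2\to\mathbb R$ with $\lambda\le\det D^2u_*\le\Lambda$, $\det D^2u_*\in C^\alpha$, and such that $\partial_{22}u_*$ has a prescribed, non-trivial Hölder modulus in the $x_2$-direction. A natural ansatz is $u_*(x,y)=\phi(x)+\tfrac12\psi(x)y^2$, which reduces Monge--Amp\`ere to a coupled relation between $\phi$ and $\psi$ that can be solved so that $\partial_{22}u_*=\psi$ exhibits the desired behavior. Then set
\[
u_r(x_1,x_2) := u_*(rx_1, r^{-1}x_2),\qquad f_r(x)=f_*(rx_1,r^{-1}x_2),
\]
which is Monge--Amp\`ere-invariant. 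A direct computation gives
\[
[D^2u_r]_{C^\alpha}\sim r^{-2-\alpha}[D^2u_*]_{C^\alpha},\qquad [f_r]_{C^\alpha}\sim r^{-\alpha}[f_*]_{C^\alpha},
\]
where the factor $r^{-\alpha}$ comes from the maximal dilation $r^{-1}$ in the $x_2$-direction and the factor $r^{-2}$ from the blow-up of $\partial_{22}u_r=r^{-2}\psi$. Eliminating $r$ yields $\|D^2u_r\|_{C^\alpha}\ge c\|f_r\|_{C^\alpha}^{(2+\alpha)/\alpha}$, which proves the first statement with $\rho=(2+\alpha)/\alpha>1$.

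\textbf{Exponential family.} For the second statement, keep the same construction but tune $r$ as a function of $\alpha$. Demanding $[f_r]_{C^\alpha}\le C$ forces $r^{-\alpha}\lesssim 1$, i.e.\ $r\ge e^{-C/\alpha}$, and then $[D^2u_r]_{C^\alpha}\gtrsim r^{-2}\ge e^{c/\alpha}$, giving the exponential lower bound.

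\textbf{Main obstacle.} The central difficulty is the uniform modulus of strict convexity. The anisotropic rescaling $(x_1,x_2)\mapsto (rx_1,r^{-1}x_2)$ stretches the sections of $u_*$ into long thin ellipses as $r\to 0$, and so $u_r$ by itself does not have a uniform modulus of strict convexity. To repair this, I would localize $u_r$ to a small neighborhood of the origin and glue it smoothly into a fixed uniformly convex function (for instance a paraboloid) outside, taking the sup-convolution or a convex interpolation between the two on a thin annulus. The delicate point is to arrange this gluing so that (i) convexity and a uniform modulus of strict convexity hold at unit scale, (ii) the $C^\alpha$-seminorm of $\det D^2 u$ is preserved up to a bounded factor, and (iii) the large $C^\alpha$-seminorm of $D^2 u$ coming from the inner piece is not killed by the interpolation. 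A secondary technical point is to verify that the upper bound in the scaling of $[D^2u_r]_{C^\alpha}$ is actually attained from below; this forces $u_*$ to be chosen so that $[\partial_{22}u_*]_{C^\alpha}$ is realized along increments in the $x_2$-direction, which is the direction preserved by the rescaling.
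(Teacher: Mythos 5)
Your high-level intuition is right — anisotropic scaling that preserves the determinant but blows up one eigenvalue of $D^2u$ is the engine behind the counterexample — but there are two concrete gaps in the proposed execution, and the paper's construction is substantially different.

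\emph{First gap: the ansatz is incompatible with the claimed scaling.} With $u_*(x,y)=\phi(x)+\tfrac12\psi(x)y^2$ you have $\partial_{22}u_*=\psi(x)$, which depends only on $x$, not on $y$. Under $u_r=u_*\circ A_r$ with $A_r=\mathrm{diag}(r,r^{-1})$, the $x$-variable \emph{shrinks} and the $y$-variable stretches, so the H\"older modulus of $\partial_{22}u_r(x_1,x_2)=r^{-2}\psi(rx_1)$ scales like $r^{-2}\cdot r^{\alpha}$, \emph{not} $r^{-2-\alpha}$. For the claimed gain you would need $\partial_{22}u_*$ to have its H\"older modulus realized along $x_2$-increments, precisely what the ansatz forbids. (Similarly $f_*$ built from this ansatz depends essentially only on $x$ after enforcing bounded $\det D^2u_*$, so $[f_r]_{C^\alpha}\sim r^\alpha$, not $r^{-\alpha}$.) Your own secondary concern about where the seminorm is realized is in fact fatal here, not just technical.

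\emph{Second gap: uniform strict convexity cannot be rescued by gluing.} You correctly identify that $u_r=u_*\circ A_r$ has modulus of strict convexity degenerating like $r^\sigma$. But the proposed fix—gluing $u_r$ into a fixed paraboloid via sup or infimal convolution—does not preserve the Monge--Amp\`ere structure. The sup of two convex functions picks up a singular Monge--Amp\`ere measure on the interface (so $\det D^2$ is no longer a $C^\alpha$ function), while infimal convolution multiplies determinants rather than preserving them. Any cutoff-based interpolation destroys convexity. There is no soft way to localize the eccentric piece while keeping $\lambda\le\det D^2\le\Lambda$ and $f\in C^\alpha$ with controlled norm, and this is essentially because the equation itself couples the eccentricity of sections to the size of the right-hand side. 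The counterexamples cannot be affine rescalings of a fixed profile: an affine rescaling of one solution trivially has the same $C^{2,\alpha}$ norm up to the norm of the affine map, so the families $(u_r)$ must be genuinely different solutions.

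The paper's construction addresses both issues at once. It builds a \emph{single} solution $u$ with the anisotropic homogeneity $u(x_1,x_2)=\kappa^{-(\gamma+1)}u(\kappa x_1,\kappa^\gamma x_2)$, $\gamma>1$, by prescribing a one-dimensional profile $g$ on $\partial Q_1$ and checking directly that $F[g,\gamma]>0$, bounded, and smooth away from the origin. This $f=\det D^2u$ is then invariant under the anisotropic dilations $A_r$ and hence \emph{discontinuous} at the origin (its value depends on the curve $x_2=mx_1^\gamma$ along which one approaches), so $u$ itself is not $C^{2,\alpha}$. The family $u_r$ is then obtained by mollifying $f$ inside $Q_r$ to get $f_r\in C^\alpha$ with $\|f_r\|_{C^\alpha(Q_1)}\lesssim r^{-\alpha\gamma/(\gamma+1)}$ and \emph{re-solving the Dirichlet problem} in $B_1$ with boundary data $u$. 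The uniform modulus of strict convexity is then automatic from Caffarelli's estimates because the $f_r$ have uniform $\lambda,\Lambda$ bounds, and the lower bound $\|D^2u_r\|_{C^\alpha}\gtrsim r^{-(\gamma-1+\alpha\gamma)/(2(\gamma+1))}$ comes from an ABP comparison showing $\|u-u_r\|_{L^\infty}\lesssim r^{1/2}$ together with the explicit homogeneity of $u(0,\cdot)$. The free parameter $\gamma$ (chosen with $\gamma>1/(1-\alpha)$) is what yields $\rho>1$ for each fixed $\alpha$, and freezing $\gamma=2$ while taking $\alpha\sim 1/|\log r|$ gives the exponential bound. Your plan has no analogue of the extra parameter $\gamma$, nor of the re-solve step, and both are essential.
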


The basic reason for the lack of linearity in \eqref{eqn:omega} can be explained as follows: using strict convexity and the affine invariance of \eqref{eqn:MA}, one can rescale to a situation in which $u$ is very close to $|\cdot|^2/2$. 
Then, the equation is linearized around this paraboloid and behaves essentially like the Laplace equation. 
However, because of the degeneracy of the Monge-Amp\`ere equation (see \cite[Section 1.1]{F} for a general discussion on this point), the geometry of a solution may become very eccentric before linearity kicks in, and even if one fixes a modulus of strict convexity, heuristic computations suggest that the eccentricity of the solution where $f$ is close to $1$ depends in a nonlinear way on $\|f\|_{C^{\alpha}(B_1)}$.

Our examples in two dimensions will show that this indeed happens\,---\,the strategy here is based on constructing solutions $u$ that are invariant under rescalings that change the axes by different powers:
\begin{equation}
\label{eqn:model}
u(x_1,x_2) = \frac{1}{\kappa^{\gamma+1}} u(\kappa x_1, \kappa^{\gamma}x_2) \qquad\text{for}\qquad \gamma > 1
\end{equation}
(see \cite{S}, \cite{W1}, and \cite{W2} for similar constructions). 
The geometry of such a $u$ near the origin is very eccentric, and one can see that $D^2u$ oscillates much more than $f : =\det D^2u$ at comparable scales. 
Yet, $f$ is invariant under the same rescaling as $u$ and is thus discontinuous at the origin. To correct for this and complete the example, 
we make a small perturbation of $f$ near the origin to make it H\"{o}lder continuous and approximate $u$ by a solution with the perturbed right-hand side. \\

This note is organized as follows. 
In Section~\ref{sec:Pos Res 1}, we prove Theorem~\ref{thm:positive1} showing, via a scaling argument, that if a solution to \eqref{eqn:MA} has a polynomial modulus of strict convexity, then $\omega$ depends polynomially on $\|f\|_{C^{\alpha}(B_{1})}$.
In Section~\ref{sec:Pos Res 2}, we prove Theorem~\ref{thm:positive2} exploiting some of the ideas Caffarelli developed in \cite{C1}.
In Section~\ref{sec:sharp}, we prove Theorem~\ref{thm:negative} splitting the argument in several steps. More concretely,
in Section~\ref{sec:Notation}, we establish notation useful for the following sections wherein we build our example family.
Then, in Section~\ref{sec:MainConstruction}, we construct a solution $u$ of the form \eqref{eqn:model},
prescribing suitable boundary data so that $f$ is strictly positive and continuous on $\partial B_1$.
(Actually, we prescribe boundary data that makes $f$ smooth away from the origin with a H\"{o}lder norm away from the origin depending only on $\gamma$.)
In Section~\ref{sec:Heuristics}, we perform some heuristic computations which show nonlinear dependence of $\|D^2u\|_{C^{\alpha}(B_{1/2})}$ on $\|f\|_{C^{\alpha}(B_{1})}$ and $1/\alpha$ at comparable scales.
In Section~\ref{sec:Approximation}, we slightly perturb $f$ near the origin and approximate $u$ by a solution with the perturbed right-hand side,
and finally in Section~\ref{sec:Nonlin Dependence}, we show that our approximation realizes the claimed nonlinear dependence.

\section{Proof of Theorem~\ref{thm:positive1}}
\label{sec:Pos Res 1}

Before proceeding with the proof of Theorem~\ref{thm:positive1}, we remark that $u$ has a polynomial modulus of strict convexity.  
Indeed, since normalized solutions (see \cite[Section 4.1]{F} for more details on normalized solutions) have a polynomial modulus of strict convexity with exponent $\sigma > 1$
depending only on $\l,\L$, and $n$ (see \cite{C2}
or \cite[Theorem 4.2.7]{F}), it follows from John's lemma \cite{J} and the affine invariance of the Monge-Amp\`{e}re equation
(see, for instance, \cite[Section 4.1.2]{F}) that 
there exists a constant 
$c_0>0$, depending only on $n,\l,\L$, and the modulus of strict convexity of $u$, such that, given any $x \in B_{1/2}$,
\begin{equation}
\label{eqn:StrictConvexity}
u(z) \geq u(x) + \langle \nabla u(x), (z-x) \rangle +c_0 |z-x|^{\sigma} \qquad \forall z \in B_{1/4}(x).
\end{equation}
From now on, we call ``universal'' any positive constant depending only on $\l,\L,n,$ and $c_0$.

Up to subtracting an affine function, we assume that $u(0) = 0$ is the minimum of $u$. Let 
\[ Z_h := \{u < h\}. \]
Then, by \eqref{eqn:StrictConvexity}, the sublevel sets $Z_h$ are compactly contained in $B_1$ for all $h \leq \hat{h}$ with $\hat{h} \ll 1$ universal. Also, there exist ellipsoids $\mathcal{E}_h$ centered at the origin (the John ellipsoids for $Z_h$) and a universal constant $\hat{C} \geq 1$ such that\footnote{Here $|E|$ denotes the Lebesgue measure of the set $E$.}

\[ \hat{C}^{-1}\mathcal{E}_h \subset Z_h \subset \hat{C}\mathcal{E}_h \qquad\text{and}\qquad |\mathcal{E}_h| = h^{n/2} \]
(see \cite{C2} or \cite[Lemmas A.3.6 and 4.1.6]{F}).
In particular, since $\hat{C}^{-1}\mathcal{E}_{\hat h}\subset Z_{\hat h}\subset B_1$, the volume estimate $|\mathcal{E}_{\hat h}| = \hat h^{n/2}$ implies that
$$
\hat{c}\hat h^{n/2}B_1 \subset \mathcal{E}_{\hat h}\subset \hat C Z_{\hat h}
$$
for some positive universal constant $\hat{c}$.
As $Z_h\supset h\hat{h}^{-1}Z_{\hat h}$ for any $0<h \leq \hat h$ (by the convexity of $u$) and by \eqref{eqn:StrictConvexity}, we deduce that
\begin{equation}
\label{eqn:Zh}
\hat{C}^{-1}\hat{c}\hat{h}^{\frac{n}{2}-1}hB_{1} \subset Z_h\subset c_0^{-1}h^{1/\sigma}B_1. 
\end{equation}
Moreover,
\begin{equation*}
\label{eqn:Eh}
\hat{C}^{-2}\hat{c}\hat h^{\frac{n}{2}-1}hB_1 \subset \mathcal{E}_{h}\subset \hat C c_0^{-1}h^{1/\sigma}B_{1},
\end{equation*}
and letting $L_h$ be affine maps such that $L_h(B_1) = \mathcal{E}_h$, we find that 
\begin{equation}
\label{eqn:LhInequalities}
\|L_h\| \leq \bar{c}h^{1/\sigma} \qquad\text{and}\qquad \|L_h^{-1}\| \leq \bar{C}h^{-1}
\end{equation}
with $\bar{c} = \hat{C}c_0^{-1}$ and $\bar{C} = \hat{C}^{2}\hat{c}^{-1}\hat{h}^{1-\frac{n}{2}}$.

Define 
\[ S_h := L_h^{-1}(Z_h)\qquad\text{and}\qquad u_h(z) := \frac{1}{h}u(L_h z). \] 
Clearly, $u_h$ solves
\[\begin{cases}
\det D^2u_h = f_h &\text{in } S_h \\
u_h = 1 &\text{on }\partial S_h
\end{cases}\]
where $f_h(z) := f(L_hz)$.
Observe that, by the first inequality in \eqref{eqn:LhInequalities}, we have that
$$[f_h]_{C^{\alpha}(S_h)} \leq \|L_h\|^{\alpha} [f]_{C^{\alpha}(Z_h)} \leq \bar{c}^\alpha h^{\alpha/\sigma}[f]_{C^{\alpha}(B_1)}.$$
Similarly, the second inequality in \eqref{eqn:LhInequalities} implies that
\begin{equation}
\label{eqn:bar u}
\|D^2u\|_{C^{\alpha}(Z_h/2)} \leq h\|L_h^{-1}\|^{2 + \alpha}\|D^2u_h\|_{C^{\alpha}(S_h/2)} \leq \bar{C}^{2+\alpha}h^{-(1+\alpha)}\|D^2u_h\|_{C^{\alpha}(S_h/2)}.
\end{equation}
Hence, setting 
\begin{equation}
\label{eqn:bar h}
\bar{h} := \|f\|_{C^{\alpha}(B_1)}^{-\sigma/\alpha},
\end{equation}
we deduce that
$$[f_{\bar{h}}]_{C^{\alpha}(S_{\bar{h}})} \leq \bar{c}^\alpha.$$
(Without loss of generality, we assume that $\bar{h} \leq \hat{h}$ as $\hat{h}$ is universal.)
Since the sets $S_h$ are equivalent to $B_1$ up to dilations by universal constants, 
it follows by \cite{C3} that $\|D^2u_{\bar{h}}\|_{C^{\alpha}(S_{\bar{h}}/2)}$ is bounded by a constant depending on $\l,\L,n,\alpha$, and $c_0$.
Hence, recalling \eqref{eqn:bar u} and \eqref{eqn:bar h}, we conclude that
\begin{equation}
\label{eqn:sectional est}
\|D^2u\|_{C^{\alpha}(Z_{\bar{h}}/2)} \leq \tilde{C}\|f\|_{C^{\alpha}(B_1)}^{\frac{\sigma(1+\alpha)}{\alpha}}
\end{equation}
for a positive constant $\tilde{C}$ depending on $\l,\L,n,\alpha$, and $c_0$.
Notice that the same argument can be used at any other point $x \in B_{1/2}$
to show that the above estimate holds with the section 
$$
Z_{\bar{h}}(x):=\{z \,:\, u(z)<u(x)+\langle \nabla u(x),z-x\rangle +\bar h\} 
$$
in place of $Z_{\bar h}=Z_{\bar h}(0)$.

To conclude the proof we notice that, given any two points $x,y \in B_{1/2}$,
for every $N>1$, we can find a sequence of points $\{x_i\}_{i=0}^N$ on the segment joining $x$ and $y$ such that 
$$
x_0=x,\qquad x_N=y,\qquad\text{and}\qquad |x_{i+1}-x_i|=\frac{|x-y|}{N}< \frac{1}{N}.
$$
In particular, if we choose $N=2\hat{C}/(\hat{c}\hat{h}^{\frac{n}{2}-1}\bar{h})$, it follows by \eqref{eqn:Zh}
that the sections $Z_{\bar{h}}(x_i)/2$ cover the segment $[x,y]$.
And so, applying \eqref{eqn:sectional est}
to each of these sections and recalling our choice of $\bar h$ (see \eqref{eqn:bar h}), we obtain that
\begin{align*}
|D^2u(x)-D^2u(y)| &\leq \sum_{i=0}^{N-1}|D^2u(x_{i+1}) - D^2u(x_i)|
\leq \tilde C\|f\|_{C^{\alpha}(B_1)}^{\frac{\sigma(1+\alpha)}{\alpha}}
\,\sum_{i=0}^{N-1}|x_{i+1} - x_i|^\alpha \\
&=\tilde C\|f\|_{C^{\alpha}(B_1)}^{\frac{\sigma(1+\alpha)}{\alpha}}N^{1-\alpha}|x-y|^\alpha 
\leq \frac{\tilde C(2\hat{C})^{1-\alpha}}{(\hat c\hat{h}^{\frac{n}{2}-1})^{1-\alpha}}\|f\|_{C^{\alpha}(B_1)}^{{2\sigma}/{\alpha}}|x-y|^\alpha,
\end{align*}
which proves that
\[ \|D^2u\|_{C^{\alpha}(B_{1/2})} \leq C\|f\|_{C^{\alpha}(B_1)}^{{2\sigma}/{\alpha}} \]
for some positive constant $C$ depending on $\lambda,\Lambda,n,\alpha$, and the modulus of strict convexity of $u$,
as desired.

\section{Proof of Theorem~\ref{thm:positive2}}
\label{sec:Pos Res 2}

We begin with a pair of lemmas in which we assume that $u$ is a strictly convex solution to
\[ \det D^2u = f \qquad \text{in } B_1 \subset \mathbb{R}^n \]
with $f \in C^{\alpha}(B_1)$.
As we are interested in the behavior of the $C^{2,\alpha}$ norm of solutions as $\alpha \to 0$, we assume that $\alpha \in (0,1/2]$.

In the following, we let $C_n$ denote a positive dimensional constant that may change from line to line.

\begin{lem}
\label{lem:basic}
Let $\hat{\delta}  \leq 10^{-2}$. Assume that $u(0) = 0$ is the minimum of $u$. There exists a positive dimensional constant $\hat{\epsilon}$ such that 
the following holds:
if $\hat{P}$ is a convex quadratic polynomial with $\det D^2\hat{P} = 1$ such that
\begin{equation}
\label{eqn:basic hyp0}
|D^2\hat{P} - \Id| \leq 1/2,
\end{equation}
\begin{equation}
\label{eqn:basic hyp1} 
\|u-\hat{P}\|_{L^{\infty}(B_1)} \leq \hat{\delta},
\end{equation}
and
\begin{equation}
\label{eqn:basic hyp2}
\|f-1\|_{L^{\infty}(B_1)} \leq \hat{\delta}\hat{\epsilon},
\end{equation}
then there exists a convex quadratic polynomial $\hat{Q}$ with $\det D^2\hat{Q} = 1$ such that
\[ |D^2\hat{Q} - D^2\hat{P}| \leq \hat{C}\hat{\delta} \]
and
\[ \|u-\hat{Q}\|_{L^{\infty}(B_{\hat r})} \leq \hat{\delta}\hat{r}^{2+\alpha} \]
for some positive dimensional constants $\hat{C}$ and $\hat{r}$.
\end{lem}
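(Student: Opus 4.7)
The plan is to use the classical compactness/contradiction scheme for improvement-of-flatness results, in the spirit of Caffarelli's interior $C^{2,\alpha}$ estimates. First, using the affine invariance of $\det D^2u=f$ and the fact that $|D^2\hat P-\Id|\le 1/2$ forces the eigenvalues of $D^2\hat P$ to lie in $[1/2,3/2]$, we precompose $u$ with an $L\in SL(n)$ of dimensionally controlled norm---and, if needed, subtract an affine function from both $u$ and $\hat P$ using that $u(0)=0$ is the minimum---to reduce to the normalized case $\hat P(x)=|x|^2/2$. We then argue by contradiction: supposing the lemma fails, there are sequences of solutions $u_k$ with right-hand sides $f_k$ and scales $\hat\delta_k\in(0,10^{-2}]$ satisfying $\|u_k-|x|^2/2\|_{L^\infty(B_1)}\le\hat\delta_k$ and $\|f_k-1\|_{L^\infty(B_1)}\le\hat\delta_k/k$, for which no quadratic polynomial $\hat Q_k$ with the stated properties exists.

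Set $w_k:=(u_k-|x|^2/2)/\hat\delta_k$, so $\|w_k\|_{L^\infty(B_1)}\le 1$. Caffarelli's interior $W^{2,p}$ and $C^{1,\beta}$ estimates, together with the quantitative stability of Monge-Amp\`ere solutions whose right-hand side is close to $1$ in $L^\infty$, yield uniform local regularity of the $u_k$; hence $\{w_k\}$ is locally equicontinuous on $B_{1/2}$. Extracting subsequences, $w_k\to w_\infty$ locally uniformly and $\hat\delta_k\to\hat\delta_\infty\in[0,10^{-2}]$. Taking viscosity limits in the reformulated equation $\det(\Id+\hat\delta_k D^2w_k)=f_k$, we find that $w_\infty$ solves the uniformly elliptic equation $\det(\Id+\hat\delta_\infty D^2w_\infty)=1$---a Monge-Amp\`ere equation with constant right-hand side when $\hat\delta_\infty>0$, or simply $\Delta w_\infty=0$ when $\hat\delta_\infty=0$. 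In either case $w_\infty\in C^\infty(B_{1/4})$ with dimensional bounds on its derivatives.

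Let $T$ be the second-order Taylor polynomial of $w_\infty$ at the origin. The bound $|w_\infty-T|(x)\le C_n|x|^3$ and a dimensional choice of $\hat r$ with $C_n\hat r^{1/2}\le 1/8$ give $\|w_\infty-T\|_{L^\infty(B_{\hat r})}\le\hat r^{2+\alpha}/8$ for every $\alpha\le 1/2$. Set $\tilde Q_k:=|x|^2/2+\hat\delta_k T$. Evaluating the limit equation at $0$ gives
\[
\det D^2\tilde Q_k=\det(\Id+\hat\delta_k D^2w_\infty(0))=1+o_k(1),
\]
where in the regime $\hat\delta_\infty=0$ the vanishing of $\trace D^2w_\infty(0)$ (from the linearized equation) yields the sharper $o_k(1)=O(\hat\delta_k^2)$. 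A scalar correction $(\beta_k/2)|x|^2$ with $|\beta_k|/\hat\delta_k\to 0$ then produces $\hat Q_k$ with $\det D^2\hat Q_k=1$ and $|D^2\hat Q_k-\Id|\le\hat C\hat\delta_k$ for a dimensional $\hat C$. Combining these ingredients,
\[
\|u_k-\hat Q_k\|_{L^\infty(B_{\hat r})}\le \hat\delta_k\|w_k-w_\infty\|_{L^\infty(B_{\hat r})}+\hat\delta_k\,\tfrac{\hat r^{2+\alpha}}{8}+\tfrac{|\beta_k|}{2}\hat r^2 \le \hat\delta_k\hat r^{2+\alpha}
\]
for all sufficiently large $k$, contradicting our assumption.

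The key technical difficulty is the compactness step: ensuring that the rescaled differences $w_k$ admit a locally uniform limit requires quantitative stability for Monge-Amp\`ere solutions with right-hand side only $L^\infty$-close to $1$, and one must treat both regimes $\hat\delta_\infty>0$ and $\hat\delta_\infty=0$ simultaneously when controlling the determinant correction $\beta_k$---a subtlety forced by the fact that $\hat\delta$ is only bounded by $10^{-2}$, not necessarily small.
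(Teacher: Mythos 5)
Your strategy is a compactness/contradiction argument, which is genuinely different from the paper's proof. The paper is entirely constructive: it solves the auxiliary problem $\det D^2w=1$ in $Z_{\tilde h}:=\{u<\tilde h\}$ with $w=\tilde h$ on $\partial Z_{\tilde h}$, uses the maximum principle to get $\|u-w\|_{L^\infty}\le C_n\hat\delta\hat\epsilon$, observes via Pogorelov that $w$ is smooth with dimensional bounds, then notes that $w-\hat P$ satisfies $\trace(A\cdot D^2(w-\hat P))=0$ with smooth uniformly elliptic $A$, and applies Schauder to get $\|w-\hat P\|_{C^3(B_{\tilde r/4})}\le C_n\hat\delta$; the desired $\hat Q$ is the Taylor polynomial of $w$. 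No contradiction, no subsequence extraction, and the constants fall out explicitly.

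Your route would also work, but as written it has two genuine gaps at precisely the places you flag as delicate. First, the compactness step: ``Caffarelli's interior $W^{2,p}$ and $C^{1,\beta}$ estimates $\ldots$ yield uniform local regularity of the $u_k$; hence $\{w_k\}$ is locally equicontinuous.'' This inference is false when $\hat\delta_k\to 0$: a uniform (dimensional) bound $\|D^2u_k\|\le C_n$ or $[\nabla u_k]_{C^\beta}\le C_n$ gives, after dividing by $\hat\delta_k$, nothing for $w_k=(u_k-|x|^2/2)/\hat\delta_k$. What actually works is to first use Pogorelov (plus the determinant lower bound) to pin $D^2u_k$ in a fixed compact $K_n$ of positive matrices, then write the equation for $u_k-|x|^2/2$ as $\trace\big(A_kD^2(u_k-|x|^2/2)\big)=f_k-1$ with $A_k:=\int_0^1\cof\big(tD^2u_k+(1-t)\Id\big)\,dt$ uniformly elliptic and merely measurable, and apply Krylov--Safonov to conclude $[w_k]_{C^\beta(B_{1/2})}\le C_n\big(1+\|(f_k-1)/\hat\delta_k\|_{L^n}\big)\le C_n$. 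That is the mechanism that produces equicontinuity at the $\hat\delta_k$-scale; the estimates you cite do not.

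Second, the assertion that ``in either case $w_\infty\in C^\infty(B_{1/4})$ with dimensional bounds on its derivatives'' is not immediate in the regime $\hat\delta_\infty>0$. Pogorelov applied to $u_\infty:=|x|^2/2+\hat\delta_\infty w_\infty$ gives only $\|D^3u_\infty\|\le C_n$, hence $\|D^3w_\infty\|\le C_n/\hat\delta_\infty$, which is not dimensional (and your $\hat r$, hence the contradiction, would then not be dimensional either, since $\hat\delta_\infty$ can be arbitrarily small). To get the dimensional bound $\|D^3w_\infty\|\le C_n$ you must linearize: $v_\infty:=\hat\delta_\infty w_\infty$ solves $\trace(B_\infty D^2v_\infty)=0$ with $B_\infty$ smooth and uniformly elliptic with dimensional bounds (using Pogorelov plus Evans--Krylov for $u_\infty$), so Schauder gives $\|D^3v_\infty\|\le C_n\|v_\infty\|_{L^\infty}\le C_n\hat\delta_\infty$. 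This is the same linearization-plus-Schauder ingredient the paper uses; it is unavoidable, and cannot be replaced by just quoting smoothness of solutions to $\det D^2=1$. Your treatment of the determinant correction $\beta_k$ in the two regimes $\hat\delta_\infty>0$ and $\hat\delta_\infty=0$ is otherwise fine.
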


\begin{proof}
Since by assumption $\hat\delta\leq 10^{-2}$,
by \eqref{eqn:basic hyp0} and \eqref{eqn:basic hyp1}, there exist positive dimensional constants $\tilde{r}$ and $\tilde{h}$ such that $B_{\tilde{r}} \subset Z_{\tilde{h}} := \{ u < \tilde{h} \} \subset B_1$.
Let $w$ solve
\[\begin{cases}
\det D^2w = 1 &\text{in } Z_{\tilde{h}} \\ 
w = \tilde{h} &\text{on } \partial Z_{\tilde{h}}.
\end{cases}\]
Then, considering the sub and supersolutions $(1\pm\hat{\delta}\hat{\epsilon})^{1/n}[w-\tilde{h}]$, the comparison principle
(see, for instance, \cite[Theorem 2.3.2]{F}) implies that 
\[ (1+\hat{\delta}\hat{\epsilon})[w-\tilde{h}] \leq u-\tilde{h} \leq (1-\hat{\delta}\hat{\epsilon})[w-\tilde{h}] \qquad \text{in } Z_{\tilde{h}}. \]
Therefore,
\begin{equation}
\label{eqn:max principle}
\|u-w\|_{L^{\infty}(Z_{\tilde{h}})} \leq C_n\hat{\delta}\hat{\epsilon}.
\end{equation}
Also, it follows by \eqref{eqn:basic hyp0} and Pogorelov's interior regularity estimates (see \cite{P} or \cite[Theorems 3.3.1
and 3.3.2]{F}) that $w$ is uniformly convex and $C^\infty$ inside $B_{\tilde r}\subset Z_{\tilde h}$ with bounds depending only on dimension inside 
$B_{\tilde r/2}$.

We now observe that, since $\det D^2\hat P=1$ and
defining $ v_t := tw+(1-t)\hat{P}$,
$$
0=\det D^2 w - \det D^2 \hat P=\int_0^1\frac{d}{dt}\det D^2 v_t\,dt
= \trace \big(A \cdot D^2(w-\hat P) \big)
$$
with
\[ A := \int_{0}^{1} \det D^2v_t (D^2v_t)^{-1} \, dt. \]
That is,
$w-\hat{P}$ solves the linear equation $\trace\big(A \cdot D^2(w-\hat{P}) \big) = 0$ in $Z_{\tilde{h}}$.
Since the matrix $A$ is smooth and uniformly elliptic 
inside $B_{\tilde{r}/2}$, it follows by elliptic regularity that
\[ \|w - \hat{P}\|_{C^{3}(B_{\tilde{r}/4})} \leq C_n\|w - \hat{P}\|_{L^{\infty}(B_{\tilde{r}/2})} \] 
(see \cite[Chapter 6]{GT}).
From \eqref{eqn:basic hyp1} and \eqref{eqn:max principle}, we notice that
\[ \|w-\hat{P}\|_{L^{\infty}(B_{\tilde{r}/2})} \leq \|u-\hat{P}\|_{L^{\infty}(B_{\tilde{r}/2})} + \|u-w\|_{L^{\infty}(B_{\tilde{r}/2})} \leq C_n\hat{\delta}, \]
and so 
\[ \|w - \hat{P}\|_{C^{3}(B_{\tilde{r}/4})} \leq C_n\hat{\delta}. \]
Let $\hat{Q}$ be the second order Taylor polynomial of $w$ centered at the origin.
Then, for all $r \leq \tilde{r}/4$,
\begin{equation}
\label{eqn:Taylor}
\|w-\hat{Q}\|_{L^{\infty}(B_r)} \leq \|D^3w\|_{L^{\infty}(B_r)}r^3= \|D^3w-D^3\hat{P}\|_{L^{\infty}(B_r)}r^3 \leq C_n\hat{\delta}r^3.
\end{equation}
Also,
\[ |D^2\hat{Q} - D^2\hat{P}| = |D^2w(0) - D^2\hat{P}| \leq \|D^2w-D^2\hat{P}\|_{L^{\infty}(B_{\tilde{r}/4})} \leq C_n\hat{\delta}. \]
Therefore, by \eqref{eqn:max principle} and \eqref{eqn:Taylor}, for all $r \leq \tilde{r}/4$,
\[ \|u-\hat{Q}\|_{L^{\infty}(B_r)} \leq \|u-w\|_{L^{\infty}(B_r)} + \|w-\hat{Q}\|_{L^{\infty}(B_r)} \leq 
C_n\hat{\delta}\big(\hat{\epsilon} + r^{3}\big). \]
Recalling that by assumption $\alpha \leq 1/2$, taking $\hat{r} = \min\{\tilde{r}/4, 1/(2{C}_n)^{2}\}$ and then $\hat{\epsilon} = \hat{r}^{3}$ proves the result.
\end{proof}

\begin{lem}
\label{lem:iteration}
Assume that $f(0) =1$ and $u(0) = 0$ is the minimum of $u$. There exist positive dimensional constants $\bar{C}, \bar{\epsilon}$, and $\bar{\delta}$ such that if 
\[ \|u - |x|^2/2\|_{L^{\infty}(B_1)} \leq \bar{\delta}\alpha, \]
and
\[ \|f-1\|_{C^{\alpha}(B_1)} \leq \bar{\delta}\alpha\bar{\epsilon}, \]
then there exists a convex quadratic polynomial $Q$ with $\det D^2Q = 1$ such that 
\[ |D^2Q| \leq \bar{C} \]
and
\[ \|u - Q\|_{L^{\infty}(B_r)} \leq \bar{C}\bar{\delta} r^{2 + \alpha} \qquad \forall r < 1. \]
\end{lem}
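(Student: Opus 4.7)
The plan is a Schauder-type iteration of Lemma~\ref{lem:basic} at the geometric scales $r_k := \hat{r}^k$. I will build a sequence of convex quadratic polynomials $P_k$ with $\det D^2 P_k = 1$ satisfying
\[
\|u - P_k\|_{L^\infty(B_{r_k})} \leq \bar\delta\alpha\, r_k^{2+\alpha}
\qquad\text{and}\qquad
|D^2 P_{k+1} - D^2 P_k| \leq \hat{C}\bar\delta\alpha\, r_k^\alpha,
\]
starting from $P_0(x) := |x|^2/2$ (for which the base case is immediate from the hypothesis on $u$), and then set $Q := \lim_k P_k$.

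For the inductive step, given $P_k$ with affine part $\ell_k$, I would rescale by
\[
\tilde u(y) := \frac{u(r_k y) - \ell_k(r_k y)}{r_k^2}, \qquad \tilde P(y) := \frac{P_k(r_k y) - \ell_k(r_k y)}{r_k^2}, \qquad \tilde f(y) := f(r_k y),
\]
so that $\det D^2\tilde u = \tilde f$, and apply Lemma~\ref{lem:basic} to $(\tilde u, \tilde P, \tilde f)$ with smallness parameter $\hat\delta_k := \bar\delta\alpha\, r_k^\alpha$. Unpacking the three hypotheses of that lemma: $|D^2\tilde P - \Id|$ is a telescoping sum bounded by $\hat C\bar\delta\alpha \sum_{j<k} r_j^\alpha$; $\|\tilde u - \tilde P\|_{L^\infty(B_1)} = r_k^{-2}\|u - P_k\|_{L^\infty(B_{r_k})} \leq \hat\delta_k$ by the inductive estimate; and $\|\tilde f-1\|_{L^\infty(B_1)} \leq [f]_{C^\alpha(B_1)}\, r_k^\alpha \leq \hat\delta_k\bar\epsilon$, using $f(0)=1$ and choosing $\bar\epsilon := \hat\epsilon$. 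Lemma~\ref{lem:basic} then delivers $\hat Q_k$ with $|D^2 \hat Q_k - D^2 \tilde P| \leq \hat C\hat\delta_k$ and $\|\tilde u - \hat Q_k\|_{L^\infty(B_{\hat r})} \leq \hat\delta_k\hat r^{2+\alpha}$, and unscaling $P_{k+1}(x) := \ell_k(x) + r_k^2\hat Q_k(x/r_k)$ closes the induction.

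The heart of the argument, and the main obstacle, is keeping $|D^2\tilde P - \Id| \leq 1/2$ uniformly in $k$. The series appearing here is $\sum_{j\geq 0} r_j^\alpha = (1-\hat r^\alpha)^{-1}$, which blows up like $1/\alpha$ as $\alpha\to 0$. Expanding $\hat r^\alpha = e^{\alpha\log\hat r}$ by Taylor yields the quantitative bound $\alpha/(1-\hat r^\alpha) \leq C_*$ for all $\alpha\in(0,1/2]$, where $C_*$ depends only on $\hat r$ (hence only on dimension). This is exactly why the hypotheses of the lemma are of size $\bar\delta\alpha$ rather than $\bar\delta$: the factor $\alpha$ cancels the $1/\alpha$ divergence of the summed error, so that the Hessians of the $P_k$ remain within a fixed neighborhood of $\Id$ throughout the iteration. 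Choosing $\bar\delta$ so that $\hat C\bar\delta C_* \leq 1/2$ (and $\bar\delta\alpha \leq \hat\delta$, which is free since $\alpha\leq 1/2$) makes the iteration run forever.

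To conclude, $\sum_k|D^2 P_{k+1}-D^2 P_k| \leq \hat C\bar\delta\alpha\sum_k r_k^\alpha \leq \hat C\bar\delta C_*$, so the Hessians $D^2 P_k$ are Cauchy; the lower-order coefficients of $P_k$ converge by a parallel argument, since $P_{k+1}-P_k$ is a quadratic polynomial whose $L^\infty(B_{r_k})$-norm is controlled by $\|u-P_k\|_{L^\infty(B_{r_k})}+\|u-P_{k+1}\|_{L^\infty(B_{r_k})}$, from which separate bounds on each monomial coefficient follow by polynomial-norm equivalence on $B_{r_k}$. The limit $Q$ satisfies $\det D^2 Q = 1$ and $|D^2 Q|\leq \bar C$. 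Finally, for $r\in(r_{k+1},r_k]$, combining the triangle inequality with the tail bound $\|Q-P_k\|_{L^\infty(B_{r_k})} \leq \bar C\bar\delta\, r_k^{2+\alpha}$ yields the desired estimate $\|u-Q\|_{L^\infty(B_r)}\leq \bar C\bar\delta\, r^{2+\alpha}$.
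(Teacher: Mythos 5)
Your proposal follows the same Schauder-type iteration as the paper, with the same key observation that the factor $\alpha$ in the hypotheses compensates for the divergence of $\sum_k \hat{r}^{k\alpha} \sim 1/\alpha$ as $\alpha \to 0$; the choice $\bar\epsilon = \hat\epsilon$ and the convergence argument for the $Q_k$ are likewise essentially identical to the paper's.

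One small but genuine issue: in your rescaling $\tilde u(y) := \bigl(u(r_k y) - \ell_k(r_k y)\bigr)/r_k^2$ you subtract the affine part $\ell_k$ of $P_k$. Since the $\hat Q$ produced by Lemma~\ref{lem:basic} is a Taylor polynomial of an auxiliary function $w$ and in general has nontrivial constant and linear terms, $\ell_k$ is not zero, and so $\tilde u(0) \neq 0$ and $\nabla\tilde u(0) \neq 0$. This violates the standing hypothesis of Lemma~\ref{lem:basic} that $u(0)=0$ is the minimum of $u$ (which the lemma's proof uses to control the geometry of the sublevel set $Z_{\tilde h}$). The paper avoids this by taking the purely quadratic rescaling $u_k(x) = \hat r^{-2k}u(\hat r^k x)$ \emph{without} any affine subtraction, which preserves the minimum at the origin, while allowing the comparison polynomial $\hat P_k$ to carry the affine part of $\hat Q_{k-1}$. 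Your subtraction is unnecessary (it does not change $u - P_k$, and hence does not help verify hypothesis \eqref{eqn:basic hyp1}), so the cleanest fix is simply to drop $\ell_k$ from the rescaling. Also a minor slip: the triangle-inequality estimate for $\|P_{k+1} - P_k\|$ should be taken on $B_{r_{k+1}}$ rather than $B_{r_k}$, since the inductive bound on $\|u - P_{k+1}\|$ is only available on the smaller ball; the coefficient estimates then go through with an extra harmless factor of $\hat r^{-1}$.
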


\begin{proof}
Let $\bar \delta \leq 10^{-2}$ to be chosen. Taking $\hat{P} = |x|^{2}/2$, we apply Lemma~\ref{lem:basic} with $\hat{\delta} = \bar \delta\alpha$ to obtain positive dimensional constants $\hat{C},\hat{r}$, and $\hat{\epsilon}$ and a convex quadratic polynomial $\hat{Q}_1$ with $\det D^2\hat{Q}_1 = 1$ such that 
\[ |D^2\hat{Q}_1 - \Id| \leq \hat{C}\bar \delta\alpha \]
and
\begin{equation}
\label{eq:u Q1}
\|u-\hat{Q}_1\|_{L^{\infty}(B_{\hat r})} \leq \bar \delta\alpha\hat{r}^{2+\alpha} 
\end{equation}
provided $\|f-1\|_{L^{\infty}(B_1)} \leq \|f-1\|_{C^{\alpha}(B_1)} \leq \bar \delta\alpha\hat{\epsilon}$.

Let $f_2(x) := f(\hat{r}x)$, $u_2$ and $\hat{P}_2$ be the quadratic rescalings of $u$ and $\hat{Q}_1$ by $\hat{r}$ respectively, that is 
\[ u_2(x) := \frac{1}{\hat{r}^2}u(\hat{r}x) \qquad\text{and}\qquad \hat{P}_2(x) := \frac{1}{\hat{r}^2}\hat{Q}_1(\hat{r}x), \]
and choose $\bar \delta=\min\{10^{-2},1/2\hat{C}\}$.
Observe that $u_2,\hat{P}_2$, and $f_2$ satisfy the hypotheses of Lemma~\ref{lem:basic} with $\hat{\delta} =\bar \delta\alpha\hat{r}^{\alpha}$. 
Indeed, \eqref{eqn:basic hyp0} is satisfied as we have chosen $\hat{C}\bar\delta\alpha \leq 1/2$, $\det D^2\hat{P}_2 =1$ holds by construction, 
\eqref{eqn:basic hyp1} follows from \eqref{eq:u Q1}, and \eqref{eqn:basic hyp2} follows since we have assumed that $\|f-1\|_{C^{\alpha}(B_1)} \leq \bar{\delta}\alpha\hat{\epsilon}$ and as $f(0)=1$.
So, applying Lemma~\ref{lem:basic}, we find a convex quadratic polynomial $\hat{Q}_2$ with $\det D^2\hat{Q}_2 = 1$ such that
\[ |D^2\hat{Q}_2 - D^2\hat{P}_2| \leq \hat{C}\bar\delta\alpha\hat{r}^{\alpha} \]
and
\[ \|u_2-\hat{Q}_2\|_{L^{\infty}(B_{\hat r})} \leq \bar\delta\alpha\hat{r}^{\alpha}\hat{r}^{2+\alpha}. \]
Also, we see that
\[ |D^2\hat{Q}_2 - \Id| \leq |D^2\hat{Q}_2 - D^2\hat{P}_2| + |D^2\hat{P}_2 - \Id| \leq \hat{C}\bar\delta\alpha(1 + \hat{r}^{\alpha}) \]
and
\[ \|u-\hat{r}^2\hat{Q}_2(\cdot/\hat{r})\|_{L^{\infty}(B_{\hat r^2})} \leq \bar\delta\alpha\hat{r}^{2(2 + \alpha)}. \]
Hence, for any $k \geq 1$, we let $f_k(x) := f_{k-1}(\hat{r}x)$ and $u_k$ and $\hat{P}_k$ be the quadratic rescalings of $u_{k-1}$ and $\hat{Q}_{k-1}$ by $\hat{r}$ respectively. Then, provided that
 $\bar{\epsilon} \leq  \hat{\epsilon}$ and
\[ \hat{C}\bar\delta \alpha(1 + \hat{r}^{\alpha} + \cdots + \hat{r}^{k\alpha}) \leq \frac{\hat{C}\bar\delta\alpha}{1-\hat{r}^{\alpha}} \leq \frac{1}{2}, \]
we can iteratively obtain convex quadratic polynomials $\hat{Q}_k$ with $\det D^2\hat{Q}_k = 1$ such that
\[ |D^2\hat{Q}_k - D^2\hat{P}_k| \leq \hat{C}\bar\delta\alpha\hat{r}^{(k-1)\alpha}, \]
\[ |D^2\hat{Q}_k - \Id| \leq \frac{1}{2}, \] 
and
\begin{equation}
\label{eq:u Qk}
\|u - \hat{r}^{2(k-1)}\hat{Q}_k(\cdot/\hat{r}^{k-1})\|_{L^{\infty}(B_{\hat r^k})} \leq \bar\delta\alpha\hat{r}^{k(2+\alpha)}.
\end{equation}
In particular, we can simply choose $\bar\epsilon=\hat \epsilon$.

We claim that the sequence 
\[ Q_k(x) := \hat{r}^{2(k-1)}\hat{Q}_k(x/\hat{r}^{k-1}) \]
converges to the desired $Q$.
Indeed, since each $Q_k$ is a quadratic polynomial, there exist triplets $(c_k,b_k,A_k) \in \mathbb{R}\times\mathbb{R}^{n}\times\mathbb{R}^{n\times n}$ such that $Q_k(x) = c_k + \langle b_k, x \rangle + \langle A_kx , x \rangle$. 
By \eqref{eq:u Qk}, we see that
$$
\|Q_{k}-Q_{k+1}\|_{L^{\infty}(B_{\hat r^{k+1}})}\leq \|u - Q_k\|_{L^{\infty}(B_{\hat r^k})}+\|u - Q_{k+1}\|_{L^{\infty}(B_{\hat r^{k+1}})}
\leq 2\bar\delta\alpha\hat{r}^{k(2+\alpha)},
$$
which implies that
\[ |c_k - c_{k+1}|, \, \hat{r}^k|b_k - b_{k+1}|, \, \hat{r}^{2k}|A_k - A_{k+1}| \leq C_n \bar{\delta}\alpha\hat{r}^{k(2+\alpha)}. \]
Hence,
\[ \|Q_k - Q_{k+1}\|_{L^{\infty}(B_{r})} \leq C_n\bar{\delta}\alpha\hat{r}^{k\alpha}\max\big\{\hat{r}^{2k},r^{2}\big\} 
\qquad \forall r \leq 1. \]
Summing these estimates we obtain that, for any $j \geq 1$,
\[
\|u-Q\|_{L^{\infty}(B_{\hat{r}^j})} \leq \|u - Q_j\|_{L^{\infty}(B_{\hat{r}^j})} + \sum_{k \geq j} \|Q_{k}-Q_{k+1}\|_{L^{\infty}(B_{\hat{r}^j})} 
\leq C_n\bar{\delta}\hat{r}^{j(2 + \alpha)},
\]
which proves the desired result.
\end{proof}

In the following proof, we call ``universal'' any positive constant depending only on 
$\l,\L,n$, and the modulus of strict convexity of $u$.
In particular, we let $C$ denote a positive universal constant that may change from line to line.

\begin{proof}[Proof of Theorem~\ref{thm:positive2}]
Without loss of generality, we assume that $f(0) = 1$ and $u(0) = 0$ is the minimum of $u$. 
Let $Z_h, L_h, S_h, u_h$, and $f_h$ be defined as in the proof of Theorem~\ref{thm:positive1}, and, via a similar argument to that in Theorem~\ref{thm:positive1}, observe that for $h > 0$ such that $Z_h \subset \subset B_1$, we have
\[ \|f_h - 1\|_{C^{\alpha}(S_h)} \leq Ch^{\alpha/\sigma}. \]
Here $\sigma > 1$ depends only on $n,\l$, and $\L$, and $C \geq 1$ is universal (see \cite{C2}).

Let $w_h$ be the solutions to 
\[\begin{cases}
\det D^2w_h = 1 &\text{in } S_h \\
w_h = 1 &\text{on } \partial S_h 
\end{cases}\]
and $Q_h$ be the second order Taylor polynomials of $w_h$ centered at the origin.
By the same techniques used in Lemma~\ref{lem:basic}, we deduce that
\[ \|u_h - Q_h\|_{L^{\infty}(B_r)} < C(h^{\alpha/\sigma} + r^3). \]
If we let $u_{r,h}$ and $Q_{r,h}$ be the quadratic rescalings of $u_h$ and $Q_h$ by $r$, then
\[ \|u_{r,h}-Q_{r,h}\|_{L^{\infty}(B_1)} < C\bigg(\frac{h^{\alpha/\sigma}}{r^2} +r\bigg). \]

After an affine transformation taking $Q_{r,h}$ to $|\cdot|^2/2$, we are in the setting of Lemma~\ref{lem:iteration} provided that $r = \bar{\delta}\alpha/2C$ and $h^{\alpha / \sigma} = (\bar{\delta}\alpha/2C)^3$. 
Consequently,
$$\|D^2u_{r,h}\|_{C^{\alpha}(B_{1/2})} \leq C.$$
Scaling back we obtain that
\begin{align*}
\|D^2u\|_{C^{\alpha}(Z_h/2)} \leq Cr^{-\alpha}h\|L_h^{-1}\|^{2 + \alpha} \leq Cr^{-\alpha} h^{-(1+\alpha)} \leq \bigg(\frac{1}{\alpha}\bigg)^{\frac{C\sigma}{\alpha}}.
\end{align*}
Then, a covering argument like the one in the proof of Theorem \ref{thm:positive1} yields
$$
\|D^2u\|_{C^{\alpha}(B_{1/2})} \leq \bigg(\frac{1}{\alpha}\bigg)^{\frac{C\sigma}{\alpha}},
$$
which implies the claimed estimate.
\end{proof}

\section{Proof of Theorem \ref{thm:negative}}
\label{sec:sharp}

\subsection{Notation}
\label{sec:Notation}

It is natural to work in a geometry suited to the scaling invariance of the model solution \eqref{eqn:model}. Hence, we define the linear transformation $A_r$ by 
$$A_r(x_1,x_2) := \left(r^{\frac{1}{\gamma+1}}x_1, r^{\frac{\gamma}{\gamma+1}}x_2\right).$$
Furthermore, let $I := [-1,1]$, $Q_1 := I \times I$, and define the rectangles $Q_r$ by
$$Q_r := A_r(Q_1).$$
Note that the area of $Q_r$ is $4r$, and that $Q_r$ is much longer horizontally than vertically for $r$ small.

For a convex function $u$ with homogeneity \eqref{eqn:model}, we have that 
\begin{equation}
\label{eqn:scaling}
u(x) = \frac{1}{r}u(A_{r}x). 
\end{equation}
Thus, we see that $A_{r}(\{u < \tau\}) = \{u < r\tau\}$, and so the sublevel set of $u$ of height $r$ is equivalent to $Q_{r}$ up to dilations. More precisely,
\[ cQ_{r} \subseteq \{u<r\} \subseteq CQ_{r} \quad \forall r >0
\qquad\text{whenever}\qquad cQ_{1} \subseteq \{u<1\} \subseteq CQ_{1}, \] 
for positive constants $c$ and $C$. 

We use the convention $s^{\beta} = |s|^{\beta}$ for all $s,\beta \in \mathbb{R}$.

In the following sections, any constant called ``universal'' is one depending only on $\gamma$.
From this point forward we let $c$ and $C$ be positive universal constants, and note that they may change from line to line.

\subsection{Construction of $u$}
\label{sec:MainConstruction}

We construct a solution $u$ defined with the homogeneity \eqref{eqn:model}:
\[ u(x_1,x_2) = \frac{1}{\kappa^{\gamma+1}}u(\kappa x_1,\kappa^{\gamma}x_2) \qquad\text{for}\qquad \gamma > 1. \]
It is immediate to check that $\det D^{2}u$ is constant along curves $x_{2} = mx_{1}^{\gamma}$ for $m \in \mathbb{R}$, and it is uniquely determined once understood along horizontal or vertical lines. Thus, we have two choices:
\[ u(x_1,x_2) = x_2^{\frac{\gamma + 1}{\gamma}}g\big(x_2^{-1/\gamma}x_1\big) \qquad\text{or}\qquad u(x_1,x_2) = x_1^{\gamma+1}\tilde{g}\big(x_1^{-\gamma}x_2\big) \]
where $g,\tilde{g} : \overline{\mathbb{R}} \rightarrow \mathbb{R}^{+}$ are convex, even functions. This can be seen using the homogeneity enjoyed by $u$ and setting
\[ u(t,\pm 1) = g(t) \qquad\text{and}\qquad u(\pm 1,t) = \tilde{g}(t) \]
with $t = x_2^{-1/{\gamma}}x_1$ in the first case and $t = x_1^{-\gamma}x_2$ in the second. Consequently, we see that either
\[ \det D^2u(x_{1},x_{2}) = F[g,\gamma](t) \qquad\text{if } t = x_2^{-1/{\gamma}}x_1 \]
or
\[ \det D^{2}u(x_{1},x_{2}) = \tilde{F}[\tilde{g},\gamma](t) \qquad\text{if } t = x_1^{-\gamma}x_2\]
where
\[ F[g,\gamma](t) := \gamma^{-2}g''(t)\Big((\gamma + 1)g(t) + (\gamma-1)tg'(t)\Big) - g'(t)^2 \]
and
\[ \tilde{F}[\tilde{g},\gamma](t) := \gamma \tilde{g}''(t)\Big((\gamma + 1)\tilde{g}(t)-(\gamma - 1)t\tilde{g}'(t)\Big) - \tilde{g}'(t)^2. \]
Notice that once either $g$ or $\tilde{g}$ is prescribed the other is determined. Indeed,
\begin{equation}
\label{eqn:matching} 
\tilde{g}(t) = t^{\frac{\gamma + 1}{\gamma}}g\big(t^{-1/\gamma}\big).
\end{equation}
With this in mind, we restrict our attention to $g: \overline{\mathbb{R}} \rightarrow \mathbb{R}^{+}$ and let
\[ u(x_{1},x_{2}) = x_2^{\frac{\gamma + 1}{\gamma}}g\big(x_2^{-1/\gamma}x_1\big). \]

We build $g$ so that $\det D^2u$ is strictly positive, smooth away from the origin, and has a universal Lipschitz constant outside $Q_{1}$. Define
\[ g_0(t) := \frac{1}{\gamma + 1}\big(1 + |t|^{\gamma + 1}\big). \]
A simple computation shows that
\[ F[g_0,\gamma](t) = \gamma^{-1}|t|^{\gamma - 1}, \]
which is nonnegative. 

First, we modify $g_{0}$ near zero to make $F$ strictly positive. We do this by replacing $g_{0}$ by a parabola on a small interval $[-t_{0},t_{0}]$. Thus, define 
\[ g_{1}(t) :=
\begin{cases}
g_{0}(t) &\text{in } \overline{\mathbb{R}} \setminus [-t_0,t_0] \\
\frac{1}{\gamma + 1}(a + bt^2) &\text{in }[-t_0,t_0]
\end{cases} \]
with
\begin{equation*}
a =  1 - \frac{\gamma - 1}{2}t_{0}^{\gamma + 1} \qquad\text{and}\qquad b = \frac{\gamma + 1}{2} t_{0}^{\gamma-1}.
\end{equation*}
Choosing $a$ and $b$ in this way, we have that $g_{1}$ is locally $C^{1,1}$. Moreover, in $[-t_0,t_0]$,
\begin{equation*}
\label{eqn:FirstParabola}
\begin{split}
F[g_1,\gamma](t) &= \frac{2(\gamma + 1)ab - 2(2\gamma-1)(\gamma - 1)b^{2} t^{2}}{\gamma^{2}(\gamma + 1)^{2}} \\
&\geq ct_{0}^{\gamma - 1}\Big[1 - Ct_{0}^{\gamma + 1}\Big] .
\end{split}
\end{equation*}
In particular, choosing $t_{0}$ sufficiently small makes $F[g_{1},\gamma]$ strictly positive. 

Second, we modify $g_{1}$ at infinity so that $F$ is bounded. By \eqref{eqn:matching}, modifying $g_{1}$ at infinity corresponds to modifying 
\[ \tilde{g}_{1}(t) = \tilde{g}_{0}(t) = \frac{1}{\gamma + 1}\Big(1 + |t|^{\frac{\gamma + 1}{\gamma}}\Big) \qquad\forall |t| < t_{0}^{-\gamma} \]
at the origin. To this end, we replace $\tilde{g}_{1}$ by a parabola on a small interval $[-\tilde{t}_0,\tilde{t}_0]$ and match the functions and derivatives at $\pm \tilde{t}_{0}$. More specifically, define
\[ \tilde{g}_{1}(t) :=
\frac{1}{\gamma + 1}\big(\tilde{a} + \tilde{b} t^2\big) \qquad \forall t \in [-\tilde{t}_0,\tilde{t}_0] \]
with
\[ \tilde{a} = 1 + \frac{\gamma - 1}{2\gamma} \tilde{t}_0^{\frac{\gamma + 1}{\gamma}} \qquad\text{and}\qquad \tilde{b} = \frac{\gamma + 1}{2\gamma} \tilde{t}_{0}^{-\frac{\gamma-1}{\gamma}}. \]
It follows that in $[-\tilde{t}_0,\tilde{t}_0]$,
\begin{equation*}
\label{eqn:SecondParabola}
\begin{split}
\tilde{F}[\tilde{g_1},\gamma](t) 
&\geq c\tilde{t}_{0}^{-\frac{\gamma-1}{\gamma}}\Big[1 - C\tilde{t}_{0}^{\frac{\gamma + 1}{\gamma}}\Big] ,
\end{split}
\end{equation*}
and if we take $\tilde{t}_0$ small enough, then $\tilde{F}[\tilde{g}_{1},\gamma]$ is strictly positive. Therefore, if we define
\[ g_{2}(t) := 
\begin{cases}
g_{1}(t) &\text{in } \big[-\tilde{t}_{0}^{-1/\gamma},\tilde{t}_{0}^{-1/\gamma}\big] \\
\frac{1}{\gamma + 1}\big(\tilde{a}|t|^{\gamma + 1} + \tilde{b}|t|^{-(\gamma - 1)}\big) &\text{in } \overline{\mathbb{R}} \setminus \big[-\tilde{t}_{0}^{-1/\gamma},\tilde{t}_{0}^{-1/\gamma}\big],
\end{cases} \]
then we have that $F[g_{2},\gamma]$ is strictly positive and bounded. Moreover, $g_{2}$ is $C^{1,1}$. 

Finally, we modify $g_{2}$ to make $F[g_{2},\gamma]$ smooth. At the moment, the second derivative of $g_{2}$ jumps on the set
\[ D := \Big{\{}\pm t_0, \pm \tilde{t}_0^{-1/\gamma}\Big{\}}. \] 
Let $\eta$ be a smooth cutoff that is identically one around the points in $D$ and zero otherwise, let $g_{2}^{\epsilon}$ be a standard mollification of $g_{2}$, and define
\[ g := g_{2} + \eta(g_{2}^{\epsilon} - g_{2}). \] 
When we do this, 
\[ |g(t) - g_{2}(t)|,\,|g'(t) - g_{2}'(t)| \leq C\epsilon \qquad \forall t \in \overline{\mathbb{R}} \]
since $g_{2}$ and $g_{2}'$ are Lipschitz and only differ from $g$ and $g'$ respectively in a neighborhood of $D$. Moreover, away from $D$ the same is true for the second derivatives of $g_{2}$ and $g$. Furthermore, the smaller value of $g_{2}''$ immediately to the left or right of a discontinuity improves under smoothing.  So, near a discontinuity the values of $F[g,\gamma]$ are at least the smaller value of $F[g_{2},\gamma]$ up to small error from $\|g - g_{2}\|_{L^{\infty}(\overline{\mathbb{R}})}$ and $\|g' - g_{2}'\|_{L^{\infty}(\overline{\mathbb{R}})}$.

With this choice of $g$ we obtain a solution $u$ with the desired homogeneity and such that $f := \det D^2u$ is smooth away from the origin, strictly positive, bounded, Lipschitz on $\partial Q_1$, and invariant under $A_{r}$:
\[ f(A_{r}x) = f(x). \]

\subsection{Heuristic Computations}
\label{sec:Heuristics}

Noticing that $\|f\|_{C^{\alpha}(\mathbb{R}^{2} \setminus Q_{1})} \leq C$ and $\partial_{22}u\sim 1$ in $Q_2 \setminus Q_1$, it follows by scaling that 
$$\|f\|_{C^{\alpha}(Q_1 \setminus Q_r)} \lesssim r^{-\frac{\alpha \gamma}{\gamma+1}}\qquad \forall r\in (0,1)$$
and
$$\|D^2u\|_{C^{\alpha}(Q_\tau \setminus Q_{\tau/2})} \sim \tau^{-\frac{\gamma - 1 + \alpha \gamma}{\gamma + 1}}\qquad \forall\tau\in (0,1)$$
(see the next section for details).

Now, given $r>0$ small, we will see that we can well-approximate $u$ by a solution $u_r$ with right-hand side $f_r$ that coincides with $f$ outside $Q_{r}$ and
is a smoothed version of $f$ inside $Q_r$. We will be able to prove that $u$ and $u_r$ are close on the scale $\tau \sim {r^{1/2}}$. In particular, $u_r$ will still satisfy
$$\|D^2u_r\|_{C^{\alpha}(B_{1/2})}\geq \|D^2u_r\|_{C^{\alpha}(Q_{\sqrt{r}} \setminus Q_{\sqrt{r}/2})} \gtrsim r^{-\frac{\gamma - 1 + \alpha\gamma}{2(\gamma + 1)}}.$$
Now, by fixing $\gamma$ such that
\[ \frac{\gamma - 1}{\gamma} > \alpha \]
and then taking $r \rightarrow 0$, we expect $\|D^2 u_r\|_{C^{\alpha}(B_{1/2})}$ to grow faster than a polynomial of degree 
\[ \frac{1}{2}\left(1 + \frac{\gamma - 1}{\alpha\gamma}\right) > 1 \]
in $\|f_r\|_{C^{\alpha}(B_1)}$.
Similarly, if we fix $\gamma$ of order $1$ and take 
\[ \alpha \sim \frac{1}{|\log r|} \qquad\text{so that}\qquad \|f\|_{C^{\alpha}(B_{1})} \sim 1, \] 
computations suggest that $\|D^2 u_r\|_{C^{\alpha}(B_{1/2})}$ grows exponentially in $1/\alpha$.
We now provide all the details.

\subsection{Construction of $u_r$ and $f_r$}
\label{sec:Approximation}
We first construct $f_r$ as follows: we set $f_{r} = f$ outside of $Q_r$,
while inside $Q_r$ we take $f_{r}$ to be an appropriate rescaling of $\tilde{F}[\tilde{g},\gamma]$ on vertical lines in $Q_{r}$
defined by
\[ f_r(x_1,x_2) := \frac{1}{\tilde{F}(1)}F\Big(r^{-\frac{1}{\gamma+1}} x_1\Big)\tilde{F}\Big(r^{-\frac{\gamma}{\gamma+1}}x_2\Big) \qquad \forall (x_1,x_2) \in Q_r, \]
where $F = F[g,\gamma]$ and $\tilde{F} = \tilde{F}[\tilde{g},\gamma]$. (Recall that $F(1) = \tilde{F}(1)$.) By scaling, we have that
\begin{equation}
\label{eqn:RHSCalpha}
\|f_r\|_{C^{\alpha}(Q_1)} \leq Cr^{-\frac{\alpha \gamma}{\gamma + 1}}.
\end{equation}
Indeed, notice that $f_{r}(A_{\kappa}x) = f_{r/\kappa}(x)$ and $\|f_1\|_{C^{\alpha}(\mathbb{R}^{2})} \leq C$ by the invariance of $f$ under $A_{r}$. Thus, for $p,q \in Q_1$, we have that
$$|f_r(p) - f_r(q)| = |f_1(A_{1/r}p) - f_1(A_{1/r}q)| \leq Cr^{-\frac{\gamma\alpha}{\gamma+1}}|p-q|^{\alpha},$$
verifying inequality \eqref{eqn:RHSCalpha}.

Next, let $u_r$ be the solution to
\[\begin{cases}
\det D^2u_r = f_r &\text{in } B_{1} \\ 
u_r  = u &\text{on } \partial B_{1}.
\end{cases}\]
Then, $\varphi := u_r - u$ solves the linear equation
\[\begin{cases}
\trace \big(A \cdot D^2\varphi \big) = f_r - f &\text{in } B_1 \\
\varphi = 0 &\text{on } \partial B_1
\end{cases}\]
where
\[ A := \int_{0}^{1} \det D^2v_t (D^2v_t)^{-1} \, dt \qquad\text{and}\qquad v_t := tu_r+(1-t)u. \]
By the concavity of $(\det)^{1/2}$ on symmetric, positive semi-definite $2 \times 2$ matrices, we have
\begin{align*}
(\det A)^{1/2} \geq \int_{0}^1 \left(t\left(\det D^2u_r\right)^{1/2} + (1-t)\left(\det D^2u\right)^{1/2}\right)\,dt \geq c.
\end{align*}
Since $|f_r - f| \leq C\1_{Q_r},$ the ABP estimate (see \cite[Chapter 9]{GT}) implies that
\begin{equation}
\label{eqn:ABP}
\|u-u_r\|_{L^{\infty}(B_1)} \leq C \|f-f_r\|_{L^2(B_1)} \leq  Cr^{1/2}.
\end{equation}

\begin{rmk}
Once we fix $\gamma$, the positive lower and upper bounds on $f$ and $f_r$ are also fixed independently of $r$. In particular, the solutions $u_r$ have uniform modulus of strict convexity (see \cite{C2}).
\end{rmk}

\subsection{Nonlinear Dependence and Conclusion of the Proof}
\label{sec:Nonlin Dependence}

Given $K>1$, let 
\[ u_{r,K}(x) := \frac{1}{Kr^{1/2}}u_r(A_{Kr^{1/2}}x). \]
By the homogeneity \eqref{eqn:scaling} and \eqref{eqn:ABP}, if $Kr^{1/2} \leq 1$, then 
\[\sup_{x_2 \in I} |u_{r,K}(0,x_{2}) - u(0,x_{2})| \leq \frac{1}{Kr^{1/2}}\sup_{x_2 \in I} |u_{r}(0,x_{2}) - u(0,x_{2})| \leq \frac{C}{K}. \]
Hence, since $u(0,\cdot)$ is homogeneous of degree $\frac{\gamma + 1}{\gamma} < 2$, taking $K$ large enough (the largeness depending only on $\gamma$)
implies that
\[ \osc_{I/2} \partial_{22} u_{r,K}(0,\cdot) \geq c > 0, \]
and so, by scaling, we conclude that
\[ \big[\partial_{22}u_{r}(0,\cdot)\big]_{C^{\alpha}(I/2)} \geq  K^{-\frac{\gamma - 1 + \alpha\gamma}{\gamma + 1}}r^{-\frac{\gamma - 1 + \alpha\gamma}{2(\gamma + 1)}}
\big[\partial_{22}u_{r,K}(0,\cdot)\big]_{C^{\alpha}(I/2)} \geq cr^{-\frac{\gamma - 1 + \alpha\gamma}{2(\gamma + 1)}}. \]
Thus,
\begin{equation}
\label{eqn:SecondDerivEstimate} 
\|D^2u_r\|_{C^{\alpha}(B_{1/2})} \geq cr^{-\frac{\gamma-1 + \alpha\gamma}{2(\gamma + 1)}}.
\end{equation}
Recalling \eqref{eqn:RHSCalpha}, we then see that
\begin{equation*}
\label{eqn:FinalInequality}
\|D^2u_r\|_{C^{\alpha}(B_{1/2})} \geq c\bigg(\frac{\|f_r\|_{C^{\alpha}(B_1)}}{C}\bigg)^{\frac{1}{2}\left(1 + \frac{\gamma - 1}{\alpha \gamma}\right)}.
\end{equation*}
In particular, given any $\alpha \in (0,1)$, we have superlinear dependence on $\|f_r\|_{C^{\alpha}(B_{1})}$ by choosing $\gamma > \frac{1}{1-\alpha}$. 

Finally, fix $\gamma = 2$, for example. Taking $\alpha = c/|\log r|$ in \eqref{eqn:RHSCalpha}, we have that $\|f_r\|_{C^{\alpha}(B_1)} \leq C$. Furthermore, by \eqref{eqn:SecondDerivEstimate} we see that
$\|D^2u_{r}\|_{C^{\alpha}(B_{1/2})}$ grows faster than a negative power of $r$, independent of $\alpha$. Therefore, we have at least exponential dependence on $1/\alpha$.

\section{Appendix}
\label{sec:appendix}

The goal of this appendix is to stress that the degeneracy and affine invariance of the Monge-Amp\`{e}re equation\,---\,and not its nonlinearity\,---\,forces $\omega$ to depend in a nonlinear fashion on $1/\alpha$ as $\alpha \to 0$ and $\|f\|_{C^{\alpha}(B_1)}$.

To show this, 
let us consider a fully nonlinear uniformly elliptic equation comparable to the Monge-Amp\`{e}re equation.
In particular, let 
$\mathcal{S}^n$ denote the space of real, symmetric $n \times n$ matrices, let
$F : \mathcal{S}^n \to \mathbb{R}$ be a uniformly elliptic operator with ellipticity constants $0 < \lambda \leq \Lambda < \infty$, and assume that $u$ is a solution of the equation
\begin{equation}
\label{eqn:u F}
F(D^2u(x)) = f(x) \qquad\text{in } B_1 \subset \mathbb{R}^{n}
\end{equation}
where $f \in C^{\alpha}(B_1)$.
Without loss of generality, we let $F(0) = f(0) = 0$.
The key assumption we make to place ourselves in a comparable setting to the Monge-Amp\`{e}re equation is to ensure that our operator $F$ has basic existence and regularity properties: there exist constants $\tilde{C} > 0$ and $\tilde \alpha \in (0,1]$ such that for any $M \in \mathcal{S}^n$ with $F(M) = 0$ and for every $\tilde{w} \in C(\partial B_1)$, there exists a unique solution $w$ to the equation
\begin{equation}
\label{eqn:assump1}
\begin{cases}
F(D^{2}w(x) + M) = 0 &\text{in } B_1 \\
w = \tilde{w} &\text{on } \partial B_1
\end{cases}
\end{equation}
and
\begin{equation}
\label{eqn:assump2}
\|w\|_{C^{2,\tilde \alpha}(B_{1/2})} \leq \tilde{C}\|w\|_{L^{\infty}(B_1)}.
\end{equation}
Indeed, for the Monge-Amp\`{e}re equation, \eqref{eqn:assump1} holds more appropriately replacing the right-hand side $0$ with the natural right-hand side $1$, $\tilde w$ with an affine function, and \eqref{eqn:assump2} holds with $\tilde \alpha=1$ thanks to \cite{P}. 

We remark that, by Evans-Krylov (see \cite{E,K}), 
if $F$ is concave on $\mathcal S^n$,
then the above assumptions are satisfied with some $\tilde\alpha$ depending on $\lambda,\Lambda,$ and $n$.

Analogous to what we have done in Section~\ref{sec:Pos Res 2}, 
since we want to understand the behavior of our estimates in $\alpha$ when $\alpha \to 0$,
we fix $\eta>0$ and we assume that $\alpha \leq \tilde \alpha-\eta$.
Hence, here we call ``universal'' any constant depending only on $\lambda, \Lambda, n,\tilde{C},\tilde \alpha$, and $\eta$.

Using \eqref{eqn:assump1} and \eqref{eqn:assump2}, if $\|u\|_{L^{\infty}(B_1)} \leq 1$ and $\|f\|_{L^{\infty}(B_1)}$ is sufficiently small, we can find a quadratic polynomial $\hat{Q}$ with $F(D^2\hat{Q})= 0$ that approximates $u$ at order $\hat{r}^{2+\alpha}$ in $B_{\hat{r}}$ for some small, universal $\hat{r} > 0$.
Then, if in addition $\|f\|_{C^{\alpha}(B_1)}$ is small, one can iterate this statement as in Lemma~\ref{lem:iteration} to produce the second order Taylor polynomial of $u$ centered at the origin and show that its Hessian is of order $1/\alpha$. 
(This follows from a careful reading of the $C^{2,\alpha}$ regularity result in \cite{C1}.) 
With this one deduces the following estimate:
\begin{thm}
\label{thm:CaffC2,a}
Fix $\eta>0$ and let $\alpha \in (0,\tilde \alpha -\eta].$
There exist positive universal constants $\bar{\epsilon}$ and $\bar C$ such that if
$u:B_1\to \mathbb R$ is a solution of \eqref{eqn:u F} with
\begin{equation}
\label{eqn:assump3}
\|u\|_{L^{\infty}(B_1)} \leq 1 
\qquad\text{and}\qquad
\|f\|_{C^{\alpha}(B_1)} \leq \bar{\epsilon},
\end{equation}
then
$$
\|u\|_{C^{2,\alpha}(B_{1/2})} \leq \frac{\bar C}{\alpha}.
$$
\end{thm}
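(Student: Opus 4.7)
The plan is to adapt the approximation-iteration scheme of Lemmas \ref{lem:basic}--\ref{lem:iteration} to the fully nonlinear uniformly elliptic setting provided by \eqref{eqn:assump1}--\eqref{eqn:assump2}. The key simplification over the Monge-Amp\`ere case is that uniform ellipticity removes the need for affine rescalings through degenerate sections, so only quadratic rescalings at a single universal radius $\hat r$ are required. The $1/\alpha$ factor then arises from a lone geometric sum of ratio $\hat r^\alpha$, which equals $1/(1-\hat r^\alpha) \sim 1/(\alpha|\log\hat r|)$ as $\alpha\to 0$.

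First I would prove an analog of Lemma \ref{lem:basic}: if $\hat P$ is a quadratic polynomial with $F(D^2\hat P) = 0$, $\|u - \hat P\|_{L^\infty(B_1)} \leq \hat\delta$, and $\|f\|_{L^\infty(B_1)} \leq \hat\delta\hat\epsilon$ for a small universal $\hat\epsilon$, then there is a quadratic polynomial $\hat Q$ with $F(D^2\hat Q) = 0$, $|D^2\hat Q - D^2\hat P| \leq \hat C\hat\delta$, and $\|u-\hat Q\|_{L^\infty(B_{\hat r})} \leq \hat\delta\hat r^{2+\alpha}$. The argument mirrors Lemma \ref{lem:basic}: let $w$ solve $F(D^2 w) = 0$ in $B_1$ with $w = u$ on $\partial B_1$, so that the ABP estimate gives $\|u - w\|_{L^\infty(B_1)} \leq C\|f\|_{L^\infty(B_1)} \leq C\hat\delta\hat\epsilon$. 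Then $\tilde w := w - \hat P$ satisfies $F(D^2\tilde w + D^2\hat P) = 0$ with $\|\tilde w\|_{L^\infty(B_1)} \leq 2\hat\delta$, so \eqref{eqn:assump2} yields $\|\tilde w\|_{C^{2,\tilde\alpha}(B_{1/2})} \leq C\hat\delta$. Taking $\hat Q$ to be the second order Taylor polynomial of $w$ at the origin (so $F(D^2\hat Q) = F(D^2 w(0)) = 0$), a triangle inequality together with $\alpha \leq \tilde\alpha - \eta$ allows one to pick $\hat r$ and $\hat\epsilon$ universally so that $\|u-\hat Q\|_{L^\infty(B_{\hat r})} \leq \hat\delta\hat r^{2+\alpha}$.

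Next I would iterate this lemma. Starting from $\hat P_0 \equiv 0$ and $\hat\delta_0 = 1$ (permissible since $F(0) = f(0) = 0$ and $\|u\|_{L^\infty(B_1)} \leq 1$), at the $k$-th stage one passes to the quadratic rescaling $u_k(x) := \hat r^{-2k}u(\hat r^k x)$ and applies the lemma with $\hat\delta_k = \hat r^{k\alpha}$, using that the rescaled right-hand side $f(\hat r^k\,\cdot)$ has $L^\infty$ norm at most $\bar\epsilon\,\hat r^{k\alpha}$ thanks to $\|f\|_{C^\alpha(B_1)} \leq \bar\epsilon$ and $f(0)=0$. Because \eqref{eqn:assump1}--\eqref{eqn:assump2} are posited uniformly over every $M$ with $F(M) = 0$, no deterioration of constants occurs across the iteration. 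One produces quadratic polynomials $\hat Q_k$ with $F(D^2\hat Q_k) = 0$ and $|D^2\hat Q_{k+1} - D^2\hat Q_k| \leq \hat C \hat r^{k\alpha}$, whose scaled-back versions $P_k$ satisfy $\|u - P_k\|_{L^\infty(B_{\hat r^k})} \leq \hat r^{k(2+\alpha)}$. Summing the Hessian increments gives
\[
\sup_k |D^2\hat Q_k| \leq \frac{\hat C}{1-\hat r^\alpha} \leq \frac{\bar C}{\alpha},
\]
using $1-\hat r^\alpha \geq c\alpha$ for small $\alpha$.

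Passing to the limit, $P_k$ converges to the second order Taylor polynomial $Q$ of $u$ at the origin with $|D^2 Q| \leq \bar C/\alpha$ and $\|u - Q\|_{L^\infty(B_r)} \leq (\bar C/\alpha)\, r^{2+\alpha}$ for all $r \leq \hat r$. The same pointwise estimate holds at every $x_0 \in B_{1/2}$ upon translating the equation (and recentering $F$ by the constant $f(x_0)$ so that the normalization at the new base point is realized), after which a covering argument analogous to the one closing the proof of Theorem \ref{thm:positive1} delivers $\|u\|_{C^{2,\alpha}(B_{1/2})} \leq \bar C/\alpha$. The main technical point is arranging the iteration so that, despite $|D^2 \hat Q_k|$ growing like $1/\alpha$, the approximation lemma remains applicable at every stage; this is precisely what the uniform hypothesis \eqref{eqn:assump2} across all $M$ with $F(M) = 0$ is designed to absorb.
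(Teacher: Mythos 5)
Your proposal correctly fleshes out the sketch given in the appendix: a single-step approximation lemma combining the ABP estimate with \eqref{eqn:assump1}--\eqref{eqn:assump2}, followed by a Lemma~\ref{lem:iteration}-style iteration at a fixed universal radius $\hat r$, with the $1/\alpha$ bound arising from the geometric sum $\sum_k \hat C\hat r^{k\alpha} \sim 1/(1-\hat r^\alpha)$. You also correctly identify the crucial simplification the paper points out at the end of the appendix\,---\,that the uniformity of \eqref{eqn:assump1}--\eqref{eqn:assump2} over the whole level set $\{F=0\}$ removes the ``$\alpha$-closeness to a round paraboloid'' constraint that forces the extra $1/\alpha$ loss in the Monge--Amp\`ere case\,---\,so this is essentially the paper's argument.
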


To remove the smallness assumptions (\eqref{eqn:assump3}) on $u$ and $f$, given a general solution $u$ of \eqref{eqn:u F}, we define
$$
\epsilon := \frac{\bar{\epsilon}}{\|u\|_{L^{\infty}(B_1)} + \|f\|_{C^{\alpha}(B_1)}},\qquad v := \epsilon u,\qquad g := \epsilon f,
\qquad\text{and}\qquad G(M) := \epsilon F(M/\epsilon).
$$
As defined, $G$ is a uniformly elliptic operator with ellipticity constants $\lambda \leq \Lambda$ such that $G(0) = 0$ and satisfies \eqref{eqn:assump1} and \eqref{eqn:assump2}. 
Furthermore, $G(D^2v) = g$, $g(0) = 0$, $\|v\|_{L^{\infty}(B_1)} \leq 1$, and $\|g\|_{C^{\alpha}(B_1)} \leq \bar{\epsilon}$. 
Hence, applying Theorem~\ref{thm:CaffC2,a} to $v,g$, and $G$, we obtain
$$
\|v\|_{C^{2,\alpha}(B_{1/2})} \leq \frac{\bar{C}}{\alpha},
$$
or equivalently 
\[
\|u\|_{C^{2,\alpha}(B_{1/2})} \leq \frac{\bar{C}}{\alpha\bar{\epsilon}}\big(\|u\|_{L^{\infty}(B_1)} + \|f\|_{C^{\alpha}(B_1)}\big),
\]
as desired.

Notice that, in contrast, for the Monge-Amp\`ere equation, the key step Lemma~\ref{lem:basic} requires $u$ to be close to a ``round'' quadratic polynomial due to the degeneracy of the equation. To make sure this holds through the iteration we must start $\alpha$-close to such a quadratic polynomial. In turn, to guarantee this we must rescale by a (possibly very eccentric) affine function, which explains the nonlinear dependence.


\section*{Acknowledgments}
A. Figalli was supported by NSF grant DMS-1262411 and NSF grant DMS-1361122.
C. Mooney was supported by NSF grant DMS-1501152. C. Mooney would like to thank Simon Brendle, Tianling Jin and Luis Silvestre for encouragement and helpful conversations.



\end{document}